\newtheorem{theorem}{Theorem}[section]
\newtheorem{proposition}[theorem]{Proposition}
\newtheorem{corollary}[theorem]{Corollary}
\newtheorem{definition}[theorem]{Definition}
\newtheorem{remark}[theorem]{Remark}
\def\R{\mathbb{R}}
\title{On the minimization of total mean curvature} 
\author{\textsc{J. Dalphin}\footnote{\textit{Institut Elie Cartan de Lorraine UMR CNRS 7502, Universit\'{e} de Lorraine, BP 70239 54506 Vand\oe{}uvre-l\`{e}s-Nancy Cedex, France.} E-mail: \texttt{jeremy.dalphin@mines-nancy.org}, \texttt{antoine.henrot@univ-lorraine.fr}, and \texttt{takeo.takahashi@inria.fr}} , \textsc{A. Henrot}$^{*}$, \textsc{S. Masnou}\footnote{\textit{Institut Camille Jordan UMR CNRS 5208, Universit\'e de Lyon 1, 43 boulevard du 11 novembre 1918, 69622 Villeurbanne Cedex, France.} E-mail: \texttt{masnou@math.univ-lyon1.fr}} , and \textsc{T. Takahashi}$^{*}$.}
\begin{document}
\maketitle

\begin{abstract}
In this paper we are interested in possible extensions of an inequality due to Minkowski:
$\int_{\partial\Omega} H\,dA \geq \sqrt{4\pi A(\partial\Omega)}$ valid for any regular open set
$\Omega\subset\mathbb{R}^3$, where $H$ denotes the scalar mean curvature and $A$ the area.
We prove that this inequality holds true for axisymmetric domains which are convex in the
direction orthogonal to the axis of symmetry. We also show that this inequality cannot be true
in more general situations. However we prove that 
$\int_{\partial\Omega} |H|\,dA \geq \sqrt{4\pi A(\partial\Omega)}$ remains true for any axisymmetric
domain.
\end{abstract}

{\bf keywords :} Total mean curvature, Minkowski inequality, shape optimization, geometric inequality\\
{\bf AMS classification :} Primary 49Q10, secondary 53A05, 58E35

\section{Introduction}
In 1901, Minkowski proved that the following inequality holds for any non-empty bounded open convex subset $\Omega\subset\mathbb{R}^{3}$ whose boundary $\partial \Omega$ is a $C^{2}$-surface:
\begin{equation}
\label{inegalite_minkowski}
\int_{\partial \Omega} H dA \geqslant \sqrt{4 \pi A(\partial \Omega)},
\end{equation}
where the integration of the scalar mean curvature $H = \frac{1}{2}(\kappa_{1} + \kappa_{2})$ is done with respect to the two-dimensional Hausdorff measure referred to as $A(.)$.
\bigskip

Announced in \cite{MinkowskiFR}, Inequality \eqref{inegalite_minkowski} is proved in \cite[\S 7]{MinkowskiDE} assuming $C^{2}$-regularity. The proof can also be found in \cite[Chapter 6, Exercise (10)]{MontielRos} in the case of ovaloids, i.e. compact simply-connected $C^{\infty}$-surfaces whose Gaussian curvature is positive everywhere.

\medskip
The original proof of Minkowski is based on the isoperimetric inequality together with Steiner-Minkowski formulae. Hence, Inequality \eqref{inegalite_minkowski} remains true if $\partial \Omega$ is only a $C^{1,1}$-surface (or equivalently, if $\partial \Omega$ has a positive reach). If we do not assume any regularity, the same inequality holds with the total mean curvature replaced by mean width.

\medskip
Equality holds in \eqref{inegalite_minkowski} if and only if $\Omega$ is an open ball. This was stated by Minkowski in {\cite[\S 7]{MinkowskiDE}} without proof. A proof due to Favard can be found in \cite[Section 19]{Favard} based on a Bonnesen-type inequality involving mixed volumes. In the appendix, {we give a proof of inequality \eqref{inegalite_minkowski}, with the case of equality, in the axisymmetric situation}, inspired by Bonnesen \cite[Section VI, \S 35 (74)]{Bonnesen}.

\medskip
Inequality \eqref{inegalite_minkowski} is actually a consequence of a generalization due to Minkowski of the isoperimetric inequality. This generalization uses the notion of mixed volumes of convex bodies. We refer to \cite[Theorem 6.2.1, Notes for Section 6.2]{Schneider} and \cite[Sections 49,52,56]{BonnesenFenchel} for a more detailed exposition on that question.

\medskip
In this paper, we are mainly interested in the validity of \eqref{inegalite_minkowski} under other various assumptions, and on the related problem of minimizing the total mean curvature with area constraint: 
\begin{equation}
\label{min_intH}
\inf_{\substack{\Sigma \in \mathfrak{C} \\ A(\Sigma) =  A_{0}}} \int_{\Sigma} H dA, 
\end{equation} 
for a suitable class $\mathfrak{C}$ of surfaces in $\mathbb{R}^{3}$. 
\bigskip

A motivation for Problem \eqref{min_intH} is the study of the Canham-Helfrich energy, used in biology to model the shape of a large class of membranes:
 \[ \mathcal{E}(\Sigma) = \int_{\Sigma} \left( H - H_{0} \right)^{2} dA = \int_{\Sigma} H^{2} dA  - 2 H_{0} \int_{\Sigma} H dA + H_{0}^{2} A_{0}, \]
 where $H_{0}\in\R$ is a fixed constant (called the spontaneous curvature) and $ A_{0} =  A(\Sigma) $ is the area of the membrane. In the particular case of membranes with negative spontaneous curvature $H_0<0$, one can wonder whether the minimization of $\mathcal{E}$ with area constraint {can be done by minimizing individually each term}. Since the Willmore energy $\int_{\Sigma} H^{2} dA$ is invariant with respect to rescaling, and spheres are the only global minimizers, this reduction makes sense only if spheres are also the only solutions to Problem \eqref{min_intH}. We prove in this paper that this is true if the problem is tackled in a {particular} class of surfaces.

\medskip
Let us first introduce two classes of embedded $2$-surfaces in $\R^3$: the class $\mathcal{A}_{1,1}$ of all compact surfaces which are boundaries of axisymmetric sets (i.e. sets with rotational invariance around an axis), and the subclass $\mathcal{A}_{1,1}^+$ of  \textit{axiconvex} surfaces, i.e. surfaces which are boundaries of axisymmetric sets whose intersection with any plane orthogonal to the symmetry axis is either a disk or empty. We first prove the following:

\begin{theorem}
\label{thm_inequality_axiconvex}
Consider the class $\mathcal{A}_{1,1}^{+}$ of axiconvex $C^{1,1}$-surfaces in $\mathbb{R}^{3}$. Then:
\[ \forall \Sigma \in \mathcal{A}_{1,1}^{+}, \quad \int_{\Sigma} H dA \geqslant \sqrt{4 \pi A(\Sigma)},  \]
where the equality holds if and only if $\Sigma$ is a sphere. In particular, for any $A_0>0$:
\[ \int_{\mathbb{S}_{A_{0}}} H dA = \min_{\substack{\Sigma \in \mathcal{A}_{1,1}^{+} \\ A(\Sigma) =  A_{0}}} \int_{\Sigma} H dA = \sqrt{4 \pi A_{0}}, \]
and the sphere $\mathbb{S}_{A_{0}}$ of area $A_{0}$ is the unique global minimizer of this problem.
\end{theorem}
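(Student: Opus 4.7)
I would begin by parametrizing $\Sigma$ via its profile. Axisymmetry allows us to align the axis with the $z$-axis, and axiconvexity forces every nonempty horizontal cross-section of the enclosed domain to be a disk, so $\Sigma$ is the surface of revolution of a single-valued function $r:[a,b]\to[0,\infty)$ with $r(a)=r(b)=0$, $r>0$ on $(a,b)$, and $r'(z)\to\pm\infty$ at the endpoints (so that $\Sigma$ closes up smoothly). Classical formulas give
\[
A(\Sigma)=2\pi\int_a^b r\sqrt{1+r'^{\,2}}\,dz,\qquad \int_\Sigma H\,dA=\pi\int_a^b\Bigl(1-\frac{rr''}{1+r'^{\,2}}\Bigr)dz,
\]
and an integration by parts via $\tfrac{d}{dz}\arctan(r')=r''/(1+r'^{\,2})$ together with the vanishing of $r$ at the endpoints recasts the total mean curvature in the suggestive form
\[
\int_\Sigma H\,dA=\pi(b-a)+\pi\int_a^b r'\arctan(r')\,dz.
\]

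Next I would change variables to arclength $s\in[0,L]$ along the profile curve, with tangent angle $\phi(s)=\arctan r'(z(s))$, so that $\dot z=\cos\phi$ and $\dot r=\sin\phi$. Axiconvexity (i.e.\ $r$ single-valued in $z$) forces $\cos\phi\geq 0$, hence $\phi\in[-\pi/2,\pi/2]$, with boundary values $\phi(0)=\pi/2$, $\phi(L)=-\pi/2$; the closure condition $r(L)=0$ reads $\int_0^L\sin\phi\,ds=0$. Setting $G(\phi):=\cos\phi+\phi\sin\phi$ and $r(s):=\int_0^s\sin\phi(\sigma)\,d\sigma$, the inequality of Theorem~\ref{thm_inequality_axiconvex} is equivalent, after squaring and cancelling the factors of $\pi$, to the clean one-dimensional estimate
\[
\Bigl(\int_0^L G(\phi)\,ds\Bigr)^{\!2}\;\geq\;8\int_0^L r(s)\,ds,
\]
with equality expected precisely when $\phi(s)=\pi/2-\pi s/L$, i.e.\ when $\Sigma$ is the sphere of radius $L/\pi$.

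To establish the sharp one-dimensional bound, my strategy is variational. For the scale-invariant functional $\mathcal F[\phi]=(\int G(\phi)\,ds)^2/\int r\,ds$, I would introduce Lagrange multipliers $\mu,\lambda$ for the constraints $\int\sin\phi=0$ and an area normalization $\int r\,ds=\mathrm{const}$. The Euler--Lagrange equation simplifies dramatically to $\cos\phi\bigl[\,\phi+\mu+\lambda(L-s)\,\bigr]=0$; since $\cos\phi>0$ on $(0,L)$, $\phi$ must be affine in $s$, and the boundary and zero-mean constraints then uniquely single out $\phi^*(s)=\pi/2-\pi s/L$---the sphere---on which a direct computation verifies equality.

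The main obstacle is to upgrade this critical-point analysis into a rigorous global inequality valid for all admissible profiles. The route I would pursue is a Bonnesen-type identity, in the spirit of the appendix proof for convex axisymmetric bodies: produce an explicit sum-of-squares decomposition that rewrites $(\int G)^2-8\int r$ as a manifestly nonnegative functional of $\phi$, vanishing precisely at $\phi^*$. A natural fallback is a second-variation analysis around $\phi^*$ combined with a compactness argument to exclude any other minimizer in the admissible class. Once the one-dimensional bound is secured, the equality case and the uniqueness of $\mathbb S_{A_0}$ as the global minimizer in $\mathcal A_{1,1}^+$ at fixed area $A_0$ follow immediately by scale invariance.
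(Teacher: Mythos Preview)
Your setup is correct and your reduction to the one-dimensional inequality
\[
\Bigl(\int_0^L G(\phi)\,ds\Bigr)^{2}\;\geq\;8\int_0^L r(s)\,ds
\]
is fine (your angle $\phi$ is essentially the paper's $\theta$ shifted by $\pi/2$). The Euler--Lagrange computation is also right, and it does single out the sphere as the unique smooth critical point. But the proposal stops exactly at the hard step: you explicitly flag that ``the main obstacle is to upgrade this critical-point analysis into a rigorous global inequality,'' and then offer only two unspecified routes (a hoped-for sum-of-squares identity, or second variation plus compactness) without executing either. Neither route is automatic here. A Bonnesen-type identity of the form in the appendix works in the \emph{convex} case because the Gaussian-curvature weight $\dot\theta\sin\theta$ is nonnegative there; for merely axiconvex profiles $\dot\theta$ changes sign and the same identity no longer gives a sign. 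A second-variation-plus-compactness argument would require you to establish existence of minimizers in $\mathcal A_{1,1}^+$ under an area constraint, which is delicate: the class is not closed under natural weak topologies, and degenerations to lower-dimensional objects must be ruled out. As written, the proof is incomplete.

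The paper bypasses this obstacle by a completely different and much shorter device: the \emph{monotone rearrangement} $\theta\mapsto\theta^{*}$. The key observations are that (i) after an integration by parts, $\int_\Sigma H\,dA=\pi\int_0^L(\sin\theta-\theta\cos\theta)\,ds$ depends only on the distribution of values of $\theta$, hence is unchanged under rearrangement; (ii) by the Hardy--Littlewood inequality applied to $s$ and $1-\cos\theta(s)$, the area \emph{increases}: $A(\Sigma^{*})\geq A(\Sigma)$; and (iii) $\theta^{*}$ is nondecreasing, so $\Sigma^{*}$ is genuinely inner-convex and the classical Minkowski inequality applies to it. Chaining these gives $\int_\Sigma H\,dA=\int_{\Sigma^{*}}H\,dA\geq\sqrt{4\pi A(\Sigma^{*})}\geq\sqrt{4\pi A(\Sigma)}$, with the equality analysis following from the equality cases of Minkowski and Hardy--Littlewood. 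This rearrangement trick is the missing idea in your plan; it replaces the global variational argument you were reaching for by a two-line comparison with the convex case.
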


We show then that this result cannot be extended to the general class of compact simply-connected $C^{1,1}$-surfaces in $\mathbb{R}^{3}$, and we even provide a negative clue for the extension to $\mathcal{A}_{1,1}$. More precisely:

\begin{theorem}
\label{thm_no_global_minimizer}
Let $A_0>0$. There exists a sequence of $C^{1,1}$-surfaces $(\Sigma_{n})_{n\in\mathbb{N}}$ and a sequence of axisymmetric $C^{1,1}$-surfaces $(\widetilde{\Sigma}_{n})_{n\in\mathbb{N}}\subset\mathcal{A}_{1,1}$ such that $ A( \Sigma_{n} )= A( \widetilde{\Sigma}_{n} ) = A_{0} $ for any $n\in\mathbb{N}$ with:
\[ \lim_{n \rightarrow +\infty} \int_{\Sigma_{n}} H dA =  - \infty \qquad \mathrm{and} \qquad \lim_{n \rightarrow +\infty} \int_{\widetilde{\Sigma}_{n}} H dA  = 0^{+}. \]
\end{theorem}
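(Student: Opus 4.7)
The strategy is to construct both sequences by hand, exploiting the fact that internal cavities or tunnels contribute strongly negative mean curvature.

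For the axisymmetric sequence, I would rotate a stadium-shaped profile (a rectangle of width $2\varepsilon_n$ and axial length $L_n$ capped by two semicircles of radius $\varepsilon_n$) around the $z$-axis, placing its center at radial distance $R_n$ from the axis, with $0 < \varepsilon_n < R_n$; the resulting $\widetilde{\Sigma}_n \in \mathcal{A}_{1,1}$ is $C^{1,1}$. Splitting the integral into four pieces --- the outer cylinder at $\rho = R_n + \varepsilon_n$ (contributing $+\pi L_n$), the inner cylinder at $\rho = R_n - \varepsilon_n$ (contributing $-\pi L_n$ since its outward normal points toward the axis), and each of the two half-toroidal caps (each contributing exactly $\pi^2 R_n$) --- one gets
\[
A(\widetilde{\Sigma}_n) = 4\pi R_n (L_n + \pi \varepsilon_n), \qquad \int_{\widetilde{\Sigma}_n} H\, dA = 2\pi^2 R_n.
\]
Fixing $\varepsilon_n = R_n/2$ and choosing $L_n$ so that $A(\widetilde{\Sigma}_n) = A_0$, the choice $R_n \to 0^+$ then yields $\int H\, dA \to 0^+$.

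For the general sequence $\Sigma_n$, the key observation is that a thin internal tunnel of radius $r_n$ and length $\ell_n$ has inner surface of area $\sim 2\pi r_n \ell_n$ with mean curvature $\sim -1/(2r_n)$, contributing $\sim -\pi \ell_n$ to the total mean curvature. I would fix a box $C = [0,1]^3$ and, for each $n$, an embedded smooth curve $\gamma_n \subset C$ with endpoints on $\partial C$ --- concretely a tightly wound vertical helix of fixed radius $\rho$ and pitch $3 r_n$, so that $\ell_n \asymp 1/r_n$. Letting $T_n$ be the closed $r_n$-tubular neighborhood of $\gamma_n$, I set $\Sigma_n = \partial(C \setminus T_n)$ and smooth the edges of the box together with the two circles where $T_n$ meets $\partial C$ so that $\Sigma_n$ is $C^{1,1}$. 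Then $A(\Sigma_n) = A(\partial C) + 2\pi r_n \ell_n + O(r_n) = O(1)$ while $\int_{\Sigma_n} H\, dA = -\pi \ell_n + O(1) \to -\infty$; a final global rescaling by $\sqrt{A_0 / A(\Sigma_n)}$ enforces the area constraint without affecting the divergence, since $\int H\,dA$ scales linearly in length.

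The main obstacle is handling the $C^{1,1}$ (not $C^2$) regularity of both families: each corner or edge --- at the stadium's straight-to-arc junctions, along the box edges, and around the tube's entry/exit circles --- must be smoothed with a radius small enough that the second fundamental form stays bounded. For every such smoothing, a direct quarter-torus calculation shows the contribution to $\int H\, dA$ is $O(1)$ uniformly in $n$, and is easily absorbed by the dominant terms. A secondary but easy verification is that $\gamma_n$ together with its tube $T_n$ is embedded in $C$, which follows from requiring the helix pitch to exceed $2 r_n$.
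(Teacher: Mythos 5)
Your quantitative heuristics are the right ones --- a thin concave wall of radius $r$ and area $\sim 2\pi r\ell$ contributes $\sim -\pi\ell$ to $\int H\,dA$, and the inner and outer walls of a thin shell nearly cancel --- and they are essentially the mechanisms the paper exploits. But both of your constructions have the wrong topology for the classes in the statement. The class $\mathcal{A}_{1,1}$ is defined (Definition \ref{definition_axisymetric_surface}) by a generating curve that starts and ends \emph{on} the axis of revolution ($x(0)=x(L)=0$, $\theta(0)=0$, $\theta(L)=\pi$), so every surface in $\mathcal{A}_{1,1}$ is a topological sphere; your stadium of revolution centred at distance $R_n>\varepsilon_n$ from the axis is a torus and does not belong to $\mathcal{A}_{1,1}$, so it cannot serve as $\widetilde{\Sigma}_n$. (A minor computational slip besides: each half-toroidal cap contributes $\pi^2R_n+4\pi\varepsilon_n$, not exactly $\pi^2R_n$, though this does not affect the limit.) Likewise, the paper's standing convention (Section \ref{sec_not}, and the corresponding entry of Table \ref{table1}) is that all surfaces are compact and simply connected; drilling a tunnel whose \emph{two} endpoints both lie on $\partial C$ turns the cube into a solid torus, so $\Sigma_n=\partial(C\setminus T_n)$ has genus $1$. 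That defeats the point of the theorem: on higher-genus surfaces it is easy to drive $\int H\,dA$ to $-\infty$ with thin handles or tunnels; the content here is that this already happens for topological spheres.

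The first gap is repairable within your own scheme: make the tunnel blind (one endpoint interior to $C$, capped by a half-sphere of radius $r_n$, which contributes only $O(1)$); the boundary then remains a sphere and your estimate $\int_{\Sigma_n} H\,dA=-\pi\ell_n+O(1)\to-\infty$ survives, together with the rescaling argument. The paper instead dents a sphere of radius $R$ with $N_\varepsilon\sim \pi R^2/(4\varepsilon^2)$ small inward spherical caps of radius $\varepsilon$, each costing $-\pi\bigl(2-\tfrac{\pi}{2}\bigr)\varepsilon+o(\varepsilon)$ in total mean curvature and $\pi\varepsilon^2+o(\varepsilon^2)$ in area. The second gap is not patchable: no torus of revolution lies in $\mathcal{A}_{1,1}$, so a genuinely different construction is needed. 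The paper uses a ``double sphere'': a sphere of radius $R$ containing a nearly coincident inverted sphere of radius $R-2r$, joined by a small annular neck at distance $\delta$ from the axis, whose generating curve does run from axis to axis; there $\int H\,dA=4\pi r+\pi^2\delta\to 0^+$ by near-cancellation of the contributions of the two spheres --- the same cancellation you observed between your inner and outer cylinders, but realised on a topological sphere.
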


\noindent It follows obviously that:
\[ \displaystyle{\inf_{\substack{\Sigma \in C^{1,1} \\ A( \Sigma ) =  A_{0}}} \int_{\Sigma} H dA = - \infty} \qquad \mathrm{and} \qquad \displaystyle{\inf_{\substack{\Sigma \in \mathcal{A}_{1,1} \\ A( \Sigma ) =  A_{0}}} \left| \displaystyle{\int_{\Sigma} HdA} \right|= 0}. \] 
Therefore, Problem \eqref{min_intH} has no solution in the class of (compact simply-connected) $C^{1,1}$-surfaces, and there is good reason to think that it might be the same within the class $\mathcal{A}_{1,1}$, but we were not able to prove it. 
\bigskip

However, although Problem \eqref{min_intH} has no global minimizer, it is easily seen that the sphere $\mathbb{S}_{A_{0}}$ of area $A_{0} $ is a local minimizer of \eqref{min_intH} in the class of {$C^{2}$-surfaces} (Remark \ref{coro_sphere_local_minimizer}) and it can also be proved that $\mathbb{S}_{A_{0}}$ is the unique critical point of \eqref{min_intH} in the class of $C^{3}$-surfaces (Theorem \ref{coro_sphere_point_critique}) by computing the first variation of total mean curvature and of area (Proposition \ref{thm_derivee_intH}).
\bigskip

Hence, this leads us naturally to consider another problem:
\begin{equation}
\label{min_int_absH}
\inf_{\substack{\Sigma \in \mathcal{A}_{1,1} \\ A( \Sigma ) =  A_{0}}} \int_{\Sigma} \vert H \vert  dA,
\end{equation}
for which we can prove:

\begin{theorem}
\label{thm_min_int_absH}
Let $\mathcal{A}_{1,1}$ denotes the class of axisymmetric $C^{1,1}$-surfaces in $\mathbb{R}^{3}$, then:
\[ \forall \Sigma \in \mathcal{A}_{1,1}, \quad \int_{\Sigma} \vert H \vert dA \geqslant \sqrt{4 \pi A( \Sigma )}, \]
where the equality holds if and only if $\Sigma$ is a sphere. In particular, for any  $A_0>0$:
\[ \int_{\mathbb{S}_{A_{0}}} \vert H \vert dA = \min_{\substack{\Sigma \in \mathcal{A}_{1,1} \\ A( \Sigma ) = A_{0}}} \int_{\Sigma} \vert H \vert dA = \sqrt{4 \pi A_{0}},  \]
and the sphere $\mathbb{S}_{A_{0}}$ of area $A_{0}$ is the unique global minimizer of this problem.
\end{theorem}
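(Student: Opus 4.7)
The plan is to reduce Theorem~\ref{thm_min_int_absH} to the axiconvex case, Theorem~\ref{thm_inequality_axiconvex}, via a decomposition of the generating curve. Using the axisymmetry, I would parametrize the generating curve $\gamma\subset\{r\geq 0\}$ of $\Sigma$ by arclength $s$, with tangent angle $\phi(s)$ so that $r'=\cos\phi$, $z'=\sin\phi$. The principal curvatures are $\kappa_1=\sin\phi/r$ (along parallels) and $\kappa_2=\phi'(s)$ (along meridians), giving $2Hr=\sin\phi+r\phi'$, and hence
\[
A(\Sigma) = 2\pi\int_\gamma r\,ds,\qquad \int_\Sigma|H|\,dA=\pi\int_\gamma|\sin\phi+r\phi'|\,ds.
\]
The inequality to prove reduces to $(\int_\gamma|\sin\phi+r\phi'|\,ds)^2 \geq 8\int_\gamma r\,ds$.

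Next, I would decompose $\gamma=\gamma_1\cup\cdots\cup\gamma_n$ into maximal arcs on which $z$ is monotone (joined at the critical points of $z|_\gamma$ and the poles on the axis). For each arc $\gamma_i$, with endpoints at radii $r_i^\pm$ and extremal heights $z_i^-<z_i^+$, I would construct an auxiliary closed curve in the half-plane by appending horizontal segments $(r_i^\pm,z_i^\pm)\to(0,z_i^\pm)$ together with the axis segment between $(0,z_i^-)$ and $(0,z_i^+)$. After a smoothing of the junction corners (with vanishing area and mean-curvature contribution in the limit), the rotation of this closed curve yields an axiconvex $C^{1,1}$-surface $\widehat\Sigma_i$ with $A(\widehat\Sigma_i)=A_i+\pi(r_i^-)^2+\pi(r_i^+)^2 \geq A_i$, where $A_i$ is the area of the arc-portion of $\Sigma$. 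Since flat horizontal disks have vanishing mean curvature, $\int_{\widehat\Sigma_i}H\,dA = \varepsilon_i\int_{\gamma_i\text{-part of }\Sigma}H\,dA$ with $\varepsilon_i\in\{\pm 1\}$ accounting for the orientation of the outward normal.

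Applying Theorem~\ref{thm_inequality_axiconvex} to each $\widehat\Sigma_i$ would then give
\[
\int_{\gamma_i\text{-part of }\Sigma}|H|\,dA\;\geq\;\Bigl|\!\int_{\widehat\Sigma_i}H\,dA\Bigr|\;\geq\;\sqrt{4\pi A(\widehat\Sigma_i)}\;\geq\;\sqrt{4\pi A_i},
\]
and summing over $i$, using the subadditivity $\sum_i\sqrt{A_i}\geq\sqrt{\sum_i A_i}$ valid for non-negative terms, would conclude that $\int_\Sigma|H|\,dA\geq\sqrt{4\pi A(\Sigma)}$. The equality case is handled by tracing through the chain: tightness in the subadditivity step forces a single monotone arc (so $\Sigma$ is axiconvex), tightness in the arc-wise bound forces the closing disks to be degenerate (so $\gamma$ meets the axis at its extrema), and equality in Theorem~\ref{thm_inequality_axiconvex} then identifies $\Sigma=\widehat\Sigma_1$ with a sphere.

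The main obstacle is that for certain monotone arcs—most notably the "inner" half of a torus, connecting the top rim $(R,r)$ to the bottom rim $(R,-r)$ through $(R-r,0)$—the naive closure by horizontal disks yields a region whose boundary has a cuspidal junction along the closing circle (the inner arc approaches the cap tangentially from inside), failing even Lipschitz regularity so that Theorem~\ref{thm_inequality_axiconvex} does not apply directly. In such cases one must either replace the flat disks by smooth caps and pass carefully to the limit (tracking the vanishing contribution of the caps to $\int H\,dA$) or else couple "inner" and "outer" arcs by means of the integral identities $\int_\gamma 2Hr\cos\phi\,ds=[r\sin\phi]_{\partial\gamma}=0$ and $\int_\gamma 2Hr\sin\phi\,ds=\text{length}(\gamma)$ (both valid because $\gamma$ is either closed or ends with $r=0$), so that the deficit of the inner arcs is compensated by the surplus of the outer ones. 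Making this coupling quantitative enough to recover the global bound is the principal technical difficulty.
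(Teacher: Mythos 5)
Your reduction to Theorem~\ref{thm_inequality_axiconvex} is the right instinct, but the cut-and-cap implementation has a genuine gap that you yourself locate and then do not close. The junctions between a maximal monotone arc and its closing horizontal disk are tangential (hence harmless) only when the arc approaches the rim from the ``outer'' side; for inward-curving arcs (your inner-torus example, which occurs for any surface in $\mathcal{A}_{1,1}$ with a deep circular groove or neck) the junction is a cusp where the tangent angle must turn by $\pi$. Your proposed fix --- smoothing with ``vanishing mean-curvature contribution in the limit'' --- is false: the meridian curvature $\kappa_2=\dot\phi$ concentrates at the rim, and the smoothing contributes $\pi\int\dot\phi\,x\,ds\to\pi^2\rho$ to $\int H\,dA$, where $\rho>0$ is the rim radius. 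So the identity $\int_{\widehat\Sigma_i}H\,dA=\varepsilon_i\int_{\gamma_i\text{-part}}H\,dA$ fails by an $O(1)$ amount precisely in the problematic cases, the first inequality in your chain breaks, and the alternative ``coupling of inner and outer arcs via integral identities'' is left entirely unquantified. As written, the proof is incomplete.

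The paper avoids all of this by never cutting the generating curve. It defines a folding rearrangement $\theta^{\bigstar}$ of the tangent angle that reflects every excursion of $\theta$ outside $[0,\pi]$ back into $[0,\pi]$ (i.e.\ it reflects exactly your arcs of decreasing $z$, but in place, keeping the curve connected and Lipschitz). Then $\cos\theta^{\bigstar}=\cos\theta$, so $x_{\bigstar}=x$ and $A(\Sigma^{\bigstar})=A(\Sigma)$ exactly --- no discarded disk areas --- while $\sin\theta^{\bigstar}=|\sin\theta|$ and $\dot\theta^{\bigstar}=\pm\dot\theta$ give the pointwise bound $|\sin\theta+\dot\theta x|\geqslant\sin\theta^{\bigstar}+\dot\theta^{\bigstar}x_{\bigstar}$, hence
\[
\int_{\Sigma}|H|\,dA\;\geqslant\;\int_{\Sigma^{\bigstar}}H\,dA\;\geqslant\;\sqrt{4\pi A(\Sigma^{\bigstar})}\;=\;\sqrt{4\pi A(\Sigma)}
\]
by Theorem~\ref{thm_inequality_axiconvex}, with no boundary or corner terms and no subadditivity step. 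The equality case then reduces, via the Lipschitz bound $\|\dot\theta\|_\infty=\|\dot\theta^{\bigstar}\|_\infty=\pi/L$, to showing $\theta$ never leaves $[0,\pi]$. If you want to salvage your route, you would have to carry out the inner/outer coupling quantitatively; the folding trick is the cleaner way to make that compensation automatic.
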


Let us note that in 1973, Michael and Simon established in \cite{SimonMichael} a Sobolev-type inequality for $m$-dimensional $C^{2}$-submanifolds of $\mathbb{R}^{n}$, for which the case $m=2$ and $n = 3$ with $f \equiv 1$ gives the following inequality:
\[ \int_{\Sigma} \vert H \vert dA \geqslant c_{0} \sqrt{ A( \Sigma )}. \]
More precisely, the constant appearing in the above inequality is $c_{0} = \frac{1}{4^{3}} \sqrt{4 \pi}$ \cite[Theorem 2.1]{SimonMichael}. The better constant $c_{0} = \frac{1}{2} \sqrt{2 \pi}$ was obtained by Topping in \cite[Lemma 2.1]{Topping} and does not seem optimal. From Theorem \ref{thm_min_int_absH}, we think that an optimal constant should be $c_{0} = \sqrt{4 \pi}$. 
\bigskip

We refer to the appendix of \cite{Topping} for a concise proof of the above inequality using Simon's ideas. We also mention \cite[Theorems 3.1, 3.2]{Castillon} for a weighted version of this inequality but less sharp as mentioned in the last paragraph of \cite[Section 3.2]{Castillon}.
\bigskip

The paper is organized as follows. We summarize in Table \ref{table1} several results and open questions related to Problems \eqref{min_intH} and \eqref{min_int_absH} (the term ``inner-convex'' refers to a closed surface which encloses a convex set). In Section \ref{sec_not}, the notation used throughout the text is introduced and the basic definitions of {surface, axisymmetry,} and axiconvexity are recalled. Then, in Sections \ref{sec_thm_axi} and \ref{sec_thm_contrex}, we respectively give the proofs of Theorems \ref{thm_inequality_axiconvex} and \ref{thm_no_global_minimizer}. In Section \ref{sec_sphere}, we study the optimality of the sphere for Problem \eqref{min_intH}. Finally, Theorem \ref{thm_min_int_absH} is proved in Section \ref{sec_thm_absH} and the Minkowski inequality \eqref{inegalite_minkowski} in the axisymmetric case is established in the appendix, where the equality case is also considered. 

\begin{table}
\centering
\begin{tabular}{|l|l|l|}
\hline
\textbf{Class of surfaces $\Sigma$} & \textbf{Assertion}  & \textbf{Proof} \\ 
\hline
$C^{1,1}$ compact inner-convex & $\displaystyle{\int_{\Sigma} H dA \geqslant \sqrt{4 \pi A( \Sigma )}}$ (equality iff $\Sigma$ sphere) & See \cite{MinkowskiDE}, \cite{Favard} \\
\hline
$C^{1,1}$ axisymmetric inner-convex & $\displaystyle{\int_{\Sigma} H dA \geqslant \sqrt{4 \pi A( \Sigma )}}$ (equality iff $\Sigma$ sphere) & See \cite{Bonnesen} \\
\hline
$C^{1,1}$ axiconvex & $\displaystyle{ \int_{\Sigma} HdA \geqslant \sqrt{4 \pi A(\Sigma)}  }$ (equality iff $\Sigma $ sphere) & Theorem \ref{thm_inequality_axiconvex} \\
\hline
\hline
$C^{1,1}$ axisymmetric & $ \displaystyle{ \inf_{A( \Sigma ) = A_{0}} } ~ \begin{array}{|c|} \displaystyle{ \int_{\Sigma} H dA  } \\ \end{array} = 0 $ &  Theorem \ref{thm_no_global_minimizer} \\
\hline
$C^{1,1}$ axisymmetric & $ \displaystyle{ \int_{\Sigma} H dA > 0} $ & \textsc{\textbf{open}} \\
\hline
$C^{1,1}$ compact simply-connected & $ \displaystyle{ \inf_{A( \Sigma) = A_{0}} \int_{\Sigma} HdA = -\infty }$  &  Theorem \ref{thm_no_global_minimizer} \\
\hline
\hline
$C^{2}$ compact simply-connected & $\mathbb{S}_{A_{0}}$ is a local minimizer of $\displaystyle{ \inf_{A (\Sigma ) = A_{0}} \int_{\Sigma} H dA}$ & Remark \ref{coro_sphere_local_minimizer} \\
\hline
$C^{3}$ compact simply-connected & $\mathbb{S}_{A_{0}}$ unique critical point of $\displaystyle{ \inf_{A (\Sigma ) = A_{0}} \int_{\Sigma} H dA}$ & Theorem \ref{coro_sphere_point_critique} \\
\hline
\hline
$C^{1,1}$ axisymmetric  & $  \displaystyle{ \int_{\Sigma} \vert H \vert dA \geqslant \sqrt{4 \pi A( \Sigma)}}$ (equality iff $\Sigma $ sphere) & Theorem \ref{thm_min_int_absH} \\
\hline
$C^{2}$ compact simply-connected & $  \displaystyle{ \int_{\Sigma} \vert H \vert dA \geqslant \sqrt{ \dfrac{\pi}{2}  A(  \Sigma )}   }$ & See \cite{SimonMichael}, \cite{Topping} \\
\hline
$C^{1,1}$ compact simply-connected & $  \displaystyle{ \int_{\Sigma} \vert H \vert dA \geqslant \sqrt{ 4 \pi A( \Sigma )}}$ (equality iff $\Sigma $ sphere) & \textsc{\textbf{open}} \\
\hline
\end{tabular}
\caption{minimizing $\int H$ or $\int |H|$ with area constraint.} 
\label{table1} 
\end{table}

\section{Definitions and notation}
\label{sec_not}
We refer to Montiel and Ros \cite[Definition 2.2]{MontielRos} for the definition of $C^{k,\alpha}$-surfaces without boundary embedded in $\mathbb{R}^3$. We only consider here surfaces homeomorphic to spheres, {i.e. compact and simply-connected}.
\bigskip

In this paper, we present several results on the particular class of $C^{1,1}$ axisymmetric surfaces. We focus on embedded axisymmetric surfaces which are obtained by rotating a planar open simple curve around the segment joining its ends, assuming that the segment meets the curve at no other point.
\bigskip

We choose the $(xz)$-plane as the curve plane and the $z$-line as the rotation axis. We denote by $L > 0$ the total length of the curve. We assume that the following parametrization holds for the curve (using the arc length $s$):
\[ \begin{array}{rrcl}
\gamma: & [0,L] & \longrightarrow & \mathbb{R}^{2} \\
 & s & \longmapsto & \gamma(s) = \left( \begin{array}{c} x(s)\\ z(s)\\ \end{array} \right), \\
\end{array} \]
and we assume without loss of generality that $\gamma(0)=(0,0)$. The axisymmetric surface $\Sigma$ spanned by the rotation of $\gamma$ is the surface $\Sigma$ parametrized by:
\begin{equation} 
\label{parametrisation_X}
\begin{array}{rrcl}
X: & [0,L]\times[0,2\pi[ & \longrightarrow & \mathbb{R}^{3} \\
 & (s,t) & \longmapsto & X(s,t) = \left( \begin{array}{c} x(s)\cos t\\ x(s) \sin t\\ z(s) \\ \end{array} \right), \\
\end{array}
\end{equation}
where $t$ refers to the rotation angle about the $z$-axis. It is well-known that all geometric quantities can be expressed with respect to the angle $\theta$ between the $x$-axis and the tangent line to the curve. This defines a Lipschitz continuous map $\theta : [0,L] \rightarrow \mathbb{R}$ such that:
\[ \forall s \in [0,L], \quad \left( \begin{array}{c} \dot{x}(s) \\ \dot{z}(s) \\ \end{array} \right)  =  \left( \begin{array}{c} \cos \theta(s) \\ \sin \theta(s) \\ \end{array} \right),   \]
therefore, recalling that $x(0)=z(0)=0$,
\begin{equation}
\label{x_z_expression} 
\forall s \in [0,L], \quad  x(s) = \int_{0}^{s} \cos \theta(t) dt \quad \mathrm{and} \quad z(s) = \int_{0}^{s} \sin \theta(t) dt. 
\end{equation}
We also have $dA = 2 \pi x(s) ds$, where $dA$ is the infinitesimal area surface element. Moreover, applying Rademacher's Theorem, the principal curvatures $\kappa_{1}$ and $\kappa_{2}$, implicitly defined by the scalar mean curvature $H = \frac{1}{2}(\kappa_{1}+\kappa_{2})$ and the Gaussian curvature $K = \kappa_{1} \kappa_{2}$, exist almost everywhere and are explicitly given by:
\[ \mbox{for a.e. } s  \in [0,L], \quad  \kappa_{1}(s) = \dfrac{\sin \theta (s)}{x(s)} \quad \mathrm{and} \quad \kappa_{2}(s) = \dot{\theta}(s) \]
{Therefore total mean curvature $\int_\Sigma H dA$ and area $A(\Sigma)$ are given by:
\begin{equation}\label{eq-intH}
\int_\Sigma H dA = \pi \int_0^L \sin\theta(s) + \dot{\theta}(s)x(s)\,ds, \qquad
A(\Sigma)=2\pi \int_0^L x(s)\,ds.
\end{equation}}
All these expressions can be found for example in \cite[Section 3.3, Example 4]{DoCarmo}. Note that the signs of $\kappa_{1}$ and $\kappa_{2}$ depend on the chosen orientation. Throughout the article, the Gauss map always represents the outer unit normal field to the surface. Hence, on the sphere of radius $R > 0$, one can check that $\theta(s) = \frac{s}{R}$ and $\kappa_{1}(s) = \kappa_{2}(s) = \frac{1}{R}$.

\begin{definition}
\label{definition_axisymetric_surface}
We say that $\Sigma$ is an axisymmetric $C^{1,1}$-surface and we write $\Sigma \in \mathcal{A}_{1,1}$ if it is generated as above by a Lipschitz continuous map $\theta: [0,L] \rightarrow \mathbb{R}$, which is admissible in the sense that the following three properties are fulfilled:
\begin{itemize}
\item[(i)] the map $\theta$ satisfies the boundary conditions $\theta(0) = 0$ and $\theta (L) = \pi$; 
\item[(ii)] the map $\gamma$ obtained from $\theta$ satisfies $x(0)=x(L) = 0$ and $z(L) > z(0) = 0$; 
\item[(iii)] the map $\gamma$ is one-to-one on $]0,L[$ and satisfies $x(s) > 0$ for any $s \in ]0,L[$.
\end{itemize}
In particular, $\Sigma$ has no boundary and no self-intersection.
\end{definition}

\begin{definition}
\label{definition_axi_conv_surface}
We say that $\Sigma$ is an axiconvex $C^{1,1}$-surface and we write $\Sigma \in \mathcal{A}_{1,1}^{+}$ if $\Sigma \in \mathcal{A}_{1,1}$ and if the generating map $\theta$ is valued in $[0,\pi]$. In that case the intersection of the surface with any plane orthogonal to
the axis of symmetry is either a circle or a point or the empty set.
\end{definition}

It is easy to check the strict inclusions: (convex and axisymmetric) $\subset$ axiconvex $\subset$ axisymmetric and to prove that an axisymmetric surface is axiconvex if and only if the ordinate function $z$ is non-decreasing, also if and only if it is inner-convex in any direction orthogonal to the axis of revolution.

\section{Proof of Theorem \ref{thm_inequality_axiconvex}}
\label{sec_thm_axi}
First, we note that any axiconvex $C^{1,1}$-surface $\Sigma $ is generated by an admissible Lipschitz continuous map $\theta : [0,L] \rightarrow [0 , \pi]$ as in Section \ref{sec_not} (and $L > 0$ refers to the total length of the generating curve) with the following conditions:
\begin{gather}
\theta(0)=0, \quad \theta(L)=\pi, \label{eq:01} \\
 \int_{0}^{L} \sin \theta(t) dt > 0, \quad   \int_{0}^{L} \cos \theta(t) dt = 0, \label{eq:02}\\
\forall s\in ]0,L[, \quad \int_{0}^{s} \cos \theta(t) dt > 0. \label{eq:03}
\end{gather} 
The first condition of \eqref{eq:02} is verified if \eqref{eq:01} holds and if $\theta([0,L]) \subset [0 , \pi]$. The above conditions are also sufficient to obtain a $C^{1,1}$-axiconvex surface from $\theta : [0,L] \rightarrow [0,\pi]$. Indeed, the fact that the curve obtained from $\theta$ is simple can be deduced from this result.

\begin{proposition}
\label{condition_iii}
Consider $ L > 0$ and a continuous function $u: [0,L] \rightarrow [0, + \infty[$ generating a curve via the $C^{1}$-map $\gamma: s \in [0,L] \mapsto  (\int_{0}^{s} \cos u(\tau) d\tau, \int_{0}^{s} \sin u(\tau)d\tau) $. If $u$ is valued in $[0,\pi]$, then $\gamma$ is a diffeomorphism. In particular, for every distinct $s,t\in ]0,L[$:
\[   \left( \int_{s}^{t} \cos u(\tau) d\tau \right)^{2} + \left( \int_{s}^{t} \sin u(\tau) d\tau \right)^{2} > 0. \]
\end{proposition}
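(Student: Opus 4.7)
The plan is to reduce the statement to proving that $\gamma$ is injective on $[0,L]$. Since $\gamma$ is $C^1$ with $|\gamma'(s)| = |(\cos u(s),\sin u(s))| = 1$, it is automatically an immersion; combined with injectivity on the compact interval $[0,L]$, this yields a homeomorphism onto its image, which is the sense in which $\gamma$ is a ``diffeomorphism'' here (a $C^1$-embedding of arc-length type). The ``in particular'' inequality is simply the statement that $\gamma(s)\neq\gamma(t)$ for distinct $s,t\in\,]0,L[$, hence follows at once from injectivity.

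The key observation is that because $u$ is valued in $[0,\pi]$, one has $\sin u \geqslant 0$ on $[0,L]$, so the ordinate function $z(s)=\int_{0}^{s}\sin u(\tau)\,d\tau$ is non-decreasing. Given $0\leqslant s<t\leqslant L$, I would distinguish two cases. If $z(s)<z(t)$, then $\gamma(s)\neq\gamma(t)$ is immediate. If $z(s)=z(t)$, then $\int_{s}^{t}\sin u\,d\tau = 0$; since $\sin u$ is continuous and non-negative on $[s,t]$, this forces $\sin u\equiv 0$ on $[s,t]$, i.e.\ $u(\tau)\in\{0,\pi\}$ for every $\tau\in[s,t]$.

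Here comes the only subtle point, which is the expected main (though minor) obstacle: one must rule out that $u$ ``jumps'' between the values $0$ and $\pi$ inside $[s,t]$. This is exactly where the continuity of $u$ on the connected interval $[s,t]$ is used: the image $u([s,t])$ is a connected subset of the two-point set $\{0,\pi\}$, hence reduced to a single point. Therefore $u$ is identically $0$ or identically $\pi$ on $[s,t]$, so $\cos u \equiv \pm 1$ on $[s,t]$ and
\[ \int_{s}^{t}\cos u(\tau)\,d\tau \;=\;\pm(t-s)\;\neq\;0, \]
which again gives $\gamma(s)\neq\gamma(t)$.

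In both cases $\gamma$ is injective on $[0,L]$, which proves the displayed inequality for all distinct $s,t\in\,]0,L[$ and completes the proof. I expect no calculation beyond the elementary identity $|\gamma'|=1$ and the monotonicity of $z$; the whole argument rests on the non-negativity of $\sin u$ together with a connectedness argument for the level set $\{u\in\{0,\pi\}\}$.
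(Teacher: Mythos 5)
Your proof is correct, and it takes a genuinely different route from the paper's. The paper identifies $\gamma$ with $s\mapsto\int_0^s e^{iu(\tau)}d\tau$, notes $|\gamma'|=1$, and then invokes the mean value theorem for vector-valued functions (in McLeod's form, which writes $\gamma(t)-\gamma(s)$ as $(t-s)$ times a convex combination of values $e^{iu(\xi_j)}$ lying on the closed upper half of the unit circle) together with the global inversion theorem to conclude injectivity. Your argument replaces this black box by a direct, elementary case analysis: monotonicity of $z(s)=\int_0^s\sin u$ handles the generic case, and when $\int_s^t\sin u=0$ the non-negativity and continuity of $\sin u$ force $u([s,t])\subseteq\{0,\pi\}$, whence connectedness makes $u$ constant there and $\int_s^t\cos u=\pm(t-s)\neq 0$. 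What your approach buys is self-containedness and, in fact, completeness: as stated, the paper's appeal to the vector-valued mean value theorem is not by itself conclusive, since a convex combination of points of the closed upper half-circle \emph{can} vanish (mixing $1$ and $-1$), so one still needs exactly the continuity/connectedness step you supply to rule this out. What the paper's formulation buys is brevity and a statement that generalizes more readily to curves whose tangent direction stays in a half-plane. Your remark that ``diffeomorphism'' should be read as ``$C^1$-embedding, i.e.\ homeomorphism onto the image with unit-speed ($C^1$, immersive) parametrization'' is the right reading, consistent with the paper's level of rigor.
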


\begin{proof}
The map $\gamma$ can be identified with the differentiable map $s\in[0,L]\mapsto \int_0^s e^{iu(\tau)}d\tau$. Obviously, $|\gamma'(s)|=1$ for every $s\in[0,L]$. If $u$ is valued in $[0,\pi]$, by the mean value theorem for vector-valued functions (see for instance \cite{McLeod}), $\gamma$ is one-to-one, and therefore a diffeomorphism by the global inversion theorem. 
\end{proof}

We also notice that the inner domain of $\Sigma$ associated with $\theta:[0,L] \rightarrow [0, \pi]$ satisfying \eqref{eq:01}, \eqref{eq:02}, and \eqref{eq:03} is a non-empty bounded open subset of $\mathbb{R}^{3}$ which is convex if and only if $\theta$ is non-decreasing. Indeed, in that case, the two principal curvatures are non-negative almost everywhere:
\[\kappa_{1}(s) = \dfrac{\sin \theta(s) }{x(s)} \geqslant 0 \qquad \mathrm{and} \qquad \kappa_{2}(s) = \dot{\theta}(s) \geqslant 0 \qquad \mathrm{a}.\mathrm{e}. \]

We prove Theorem \ref{thm_inequality_axiconvex} by using a non-decreasing rearrangement of $\theta$:
\begin{equation}
\label{eq:04}
\forall s \in [0,L], \quad \theta^{*}(s) = \sup \left\lbrace c \in [0,\pi], \quad s \in \left[ L - \vert \left\lbrace t \in [0,L], \quad \theta(t) \geqslant c \right\rbrace \vert, L \right] \right\rbrace,     
\end{equation}
where $\vert  ~.~  \vert $ refers here to the one-dimensional Lebesgue measure. We split the proof into the following three steps:
\begin{enumerate}
\item We check that $\theta^{*}$ generates an axisymmetric inner-convex $C^{1,1}$-surface $\Sigma^{*}$.
\item We show that: 
\[ \int_{\Sigma} H dA = \int_{\Sigma^{*}} H dA \geqslant \sqrt{ 4 \pi A( \Sigma^{*} )} \geqslant \sqrt{4 \pi A( \Sigma )}. \]
\item We study the equality case.
\end{enumerate}
It is convenient to first recall some well-known results about rearrangements.

\begin{proposition}
\label{Parrangements}
Consider any Lipschitz continuous map $u: [0,L] \rightarrow [0 , \infty[$ and its non-decreasing rearrangement $u^{*}$ defined by: 
\[ \forall s \in [0,L], \quad u^{*}(s) = \sup \left\lbrace c \in [0,\infty[, \quad s \in \left[ L - \vert \left\lbrace t \in [0,L], \quad u(t) \geqslant c \right\rbrace \vert, L \right] \right\rbrace.  \]
Then, the following properties hold true. 
\begin{enumerate}
\item The map $u^{*}$ is non-decreasing.
\item The map $u^{*}$ is Lipschitz continuous with the same Lipschitz modulus as $u$.
\item For any continuous map $F: [0, + \infty[ \rightarrow \mathbb{R}$, we have the following equality: 
\[ \int_{0}^{L} F(u(s))ds = \int_{0}^{L} F(u^{*}(s))ds. \]
\item For any continuous increasing map $F: [0, + \infty[ \rightarrow [0,+\infty[$, we have $(F(u))^{*} = F(u^{*})$.
\item (Hardy--Littlewood inequality) If $v : [0,L] \rightarrow [0,+\infty[$ is another Lipschitz continuous map and $v^{*}$ denotes its non-decreasing rearrangement, then:
\[ \int_{0}^{L} u(s) v(s) ds \leqslant \int_{0}^{L} u^{*}(s) v^{*}(s) ds.\]
\end{enumerate}
\end{proposition}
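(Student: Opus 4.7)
The plan is to organize the five assertions around the distribution function
\[ \mu(c):=|\{t\in[0,L]:u(t)\geq c\}|,\qquad c\geq 0, \]
which is non-increasing and left-continuous with $\mu(0)=L$. The definition of $u^{*}$ rewrites as $u^{*}(s)=\sup\{c\geq 0:\mu(c)\geq L-s\}$, so $u^{*}$ is a generalized inverse of $c\mapsto L-\mu(c)$. Assertion (1) is immediate, since the admissible set in the supremum grows with $s$.

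A key preliminary step is the equimeasurability identity
\[ \forall c\geq 0,\qquad \{s\in[0,L]:u^{*}(s)\geq c\}=[L-\mu(c),L], \]
verified directly from the supremum definition and the left-continuity of $\mu$. In particular, $u$ and $u^{*}$ share the same distribution function. Assertion (3) then follows by a layer-cake argument: approximating $F$ by simple functions reduces both $\int_{0}^{L}F(u)\,ds$ and $\int_{0}^{L}F(u^{*})\,ds$ to the same linear functional of $\mu$. For assertion (4), when $F$ is continuous and strictly increasing, the map $F\circ u^{*}$ is non-decreasing as a composition of non-decreasing maps, and has the same distribution as $F\circ u$, since $\{F(u)\geq c\}=\{u\geq F^{-1}(c)\}$; uniqueness of the non-decreasing rearrangement on $[0,L]$ then forces $(F\circ u)^{*}=F\circ u^{*}$.

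The main obstacle is assertion (2), the Lipschitz estimate. Let $K$ denote the Lipschitz modulus of $u$ and fix $s_{1}<s_{2}$ with $c_{i}:=u^{*}(s_{i})$; by (1), $c_{1}\leq c_{2}$, and it suffices to handle the case $c_{1}<c_{2}$. The proof combines two facts. On the one hand, since $u$ is continuous, for every $c'<c''$ in $(c_{1},c_{2})$ the intermediate value theorem produces $\sigma<\tau$ in $[0,L]$ with $u(\sigma)=c'$, $u(\tau)=c''$, and $c'<u(t)<c''$ on $(\sigma,\tau)$; the $K$-Lipschitz bound forces $\tau-\sigma\geq(c''-c')/K$, hence $\mu(c')-\mu(c'')\geq(c''-c')/K$, and passing to $c'\to c_{1}^{+}$, $c''\to c_{2}^{-}$ yields
\[ \mu(c_{1}^{+})-\mu(c_{2}^{-})\geq\frac{c_{2}-c_{1}}{K}. \]
On the other hand, the equimeasurability identity together with the characterization of $u^{*}(s_{i})$ as a supremum gives $\mu(c_{1}^{+})\leq L-s_{1}$ and $\mu(c_{2}^{-})\geq\mu(c_{2})\geq L-s_{2}$, so $\mu(c_{1}^{+})-\mu(c_{2}^{-})\leq s_{2}-s_{1}$. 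Combining the two inequalities yields $c_{2}-c_{1}\leq K(s_{2}-s_{1})$, which is (2).

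Finally, assertion (5) rests on the layer-cake identity
\[ \int_{0}^{L}u(s)v(s)\,ds=\int_{0}^{+\infty}\!\int_{0}^{+\infty}|\{u\geq\sigma\}\cap\{v\geq\tau\}|\,d\sigma\,d\tau, \]
and its analogue for $u^{*},v^{*}$. Since $\{u^{*}\geq\sigma\}=[L-\mu_{u}(\sigma),L]$ and $\{v^{*}\geq\tau\}=[L-\mu_{v}(\tau),L]$ share the right endpoint $L$, their intersection has measure $\min(\mu_{u}(\sigma),\mu_{v}(\tau))$, which dominates $|\{u\geq\sigma\}\cap\{v\geq\tau\}|$ pointwise by monotonicity of the Lebesgue measure. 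Integrating in $(\sigma,\tau)$ yields (5).
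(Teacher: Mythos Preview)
Your argument is correct and self-contained. The paper, by contrast, does not actually prove this proposition: it simply records that the five properties are classical, cites the relevant statements in Kawohl and Kesavan for each item, and observes that those references treat the non-increasing rearrangement $u^{\sharp}$, related to the present one by $u^{*}(s)=u^{\sharp}(L-s)$.

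So the difference is not one of strategy but of scope: the paper outsources the proof to the literature via the reflection $s\mapsto L-s$, while you rebuild everything directly from the distribution function $\mu(c)=|\{u\geq c\}|$ and the equimeasurability identity $\{u^{*}\geq c\}=[L-\mu(c),L]$. Your treatment of the Lipschitz bound (item 2) is the only place requiring real work, and your argument---producing, via the intermediate value theorem, an interval of length at least $(c''-c')/K$ on which $c'<u<c''$, and combining this with the bounds $\mu(c_{1}^{+})\leq L-s_{1}$, $\mu(c_{2})\geq L-s_{2}$ coming from the supremum characterization---is the standard one (essentially the content of the Kawohl reference the paper cites). The layer-cake proofs you give for (3) and (5) are likewise the canonical ones. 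In short, you have written out in full what the paper chose to cite; nothing is missing and nothing is superfluous.
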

\begin{proof}
The above results are quite classical. We refer to \cite{Kawohl,Kesavan} for proofs and references. The first property corresponds to \cite[Proposition 1.1.1]{Kesavan}. The second one is proved in \cite[Lemma 2.3]{Kawohl}. The third and fourth one are respectively established in \cite{Kawohl} II.2 Property (C) and \cite[Proposition 1.1.4]{Kesavan}. Concerning the Hardy-Littlewood inequality, a proof can be found in \cite[Theorem 1.2.2]{Kesavan} or in \cite{Kawohl} II.2 Property (P1). To be a bit more precise, the proofs are generally written for a non-increasing rearrangement of $u$. For instance, the rearrangement of $u$ is defined in \cite{Kesavan} by:
\[\forall s \in [0,L], \quad u^{\sharp}(s):=\inf \left\lbrace c \in [0,\infty[, \quad s > \vert \left\lbrace t \in [0,L], \quad u(t) > c \right\rbrace \vert \right\rbrace. \]
However, one can notice that: 
\[ \forall s \in [0,L],\quad u^{*}(s) = u^{\sharp}(L-s), \]
and adapt the proofs of \cite{Kesavan} in order to deduce the above properties.
\end{proof}

\begin{proof}[Proof of Theorem \ref{thm_inequality_axiconvex}]
\noindent \textbf{Step 1:} the map $\theta^{*}$ defined by \eqref{eq:04} generates an axisymmetric inner-convex $C^{1,1}$-surface $\Sigma^{*}$.
\bigskip

We only need to check \eqref{eq:01}, \eqref{eq:02}, and \eqref{eq:03} for $\theta^{*}$. Assertion \eqref{eq:01} follows from the definition of $\theta^{*}$ given in \eqref{eq:04}. We define the functions: 
\[ \forall s \in [0,L], \quad x_{*}(s) = \int_{0}^{s} \cos \theta^{*}(t)  dt \qquad \mathrm{and} \qquad z_{*}(s) = \int_{0}^{s} \sin \theta^{*}(t)  dt. \] 
Note that $x_{*}, z_*$ {\it are not} the rearrangements of $x,z$. From Property 3 in Proposition \ref{Parrangements}, we get $x_*(L)=x(L)=0$ and $z_*(L)=z(L)>0$ then the relations in \eqref{eq:02} hold true for $\theta^*$.  Relation \eqref{eq:03} is equivalent to $x_{*}(s) > 0$ for any $s \in ]0,L[$. {Since $\dot{x}_{*} = \cos \theta^{*}$}, Property 1 in Proposition \ref{Parrangements} combined with the fact that $\theta^{*}([0,L]) \subseteq [0, \pi]$ ensures $x_{*}$ is a concave map, not identically zero. Hence, $x_{*}>0$ in $]0,L[$.
\bigskip

\noindent \textbf{Step 2:} we compare the total mean curvature and the area of $\Sigma$ with the ones of $\Sigma^{*}$.
\bigskip

First, observe that we can obtain from an integration by parts:
\[  \int_{\Sigma} H dA  =  \int_{0}^{L} \dfrac{1}{2} \left( \dfrac{\sin \theta (s)}{x(s)} + \dot{\theta}(s) \right) 2 \pi x(s) ds  = \pi \int_{0}^{L} F (\theta(s)) ds, \]
where $F$ is the continuous map $ x \mapsto \sin x - x \cos x$. Using Property 3 in Proposition \ref{Parrangements}, we deduce that:
\begin{equation}  
\label{eq_F}
\int_{\Sigma} H dA = \int_{\Sigma^{*}} H dA. 
\end{equation}
Now, since $\Sigma^{*}$ is an {axisymmetric} inner-convex $C^{1,1}$-surface, we can apply the Minkowski Theorem, see \eqref{inegalite_minkowski} {or Corollary \ref{coro_minkowski_axi_bonnesen}:}
\begin{equation} 
\label{equ_min}
\int_{\Sigma^{*}} H d A \geqslant \sqrt{ 4 \pi A( \Sigma^{*} )}.
\end{equation} 
Then, we need to compare the areas of $\Sigma$ and $\Sigma^{*}$. For that purpose, we are going to use the Hardy-Littlewood inequality combined with the following observation coming from an integration by parts:
\[ A ( \Sigma ) = \int_{\Sigma} dA = \int_{0}^{L} 2 \pi x(s) ds = - 2 \pi \int_{0}^{L} s \cos \theta(s) ds. \]
Set $u(s) = s$ and $v(s) = 1 - \cos \theta(s)$ for every $s \in [0,L]$. These two functions being non-negative and Lipschitz continuous, we get from Property 5 of Proposition \ref{Parrangements}:
\[ \int_{0}^{L} u(s) v(s) ds \leqslant \int_{0}^{L} u^{*}(s) v^{*}(s) ds, \]
where $u^{*}$ and $v^{*}$ are the non-decreasing rearrangements of $u$ and $v$, respectively. Since the continuous map $x \mapsto 1 - \cos x$ is non-negative and increasing on $[0, \pi]$, we use Property 4 in Proposition \ref{Parrangements} in order to get $v^{*} =(1- \cos (\theta))^{*} = 1 - \cos (\theta^{*}) $ but we have also $u^{*}(s) = u(s) = s$. Finally, we obtain that:
\begin{equation}  
\label{equ_area}
\dfrac{L^{2}}{2} + \dfrac{A(\Sigma)}{2 \pi } = \int_{0}^{L} s ( 1 - \cos \theta(s) ) ds \leqslant \int_{0}^{L} s ( 1 - \cos  \theta^{*}(s) ) ds = \dfrac{L^{2}}{2} + \dfrac{A( \Sigma^{*} )}{2 \pi}. 
\end{equation}
Combining \eqref{eq_F}, \eqref{equ_min}, and \eqref{equ_area}, the inequality of Theorem \ref{thm_inequality_axiconvex} is therefore established:
\[ \int_{\Sigma} H dA = \int_{\Sigma^{*}} H dA \geqslant \sqrt{ 4 \pi A( \Sigma^{*} )} \geqslant \sqrt{4 \pi A( \Sigma )}. \]
\bigskip

\noindent \textbf{Step 3:} the equality case.
\bigskip

Assume that there exists $\Sigma \in \mathcal{A}^{+}_{1,1}$ such that the equality holds in the previous inequalities. Then, we have:
\begin{equation}
\label{eq_equality} 
\int_{\Sigma} H dA  = \int_{\Sigma^{*}} H dA = \sqrt{4 \pi A( \Sigma^{*} )} = \sqrt{4 \pi A( \Sigma )}. 
\end{equation}
Therefore, since $\Sigma^{*}$ is an inner-convex $C^{1,1}$-surface, using the Minkowksi Theorem, we deduce that $\Sigma^{*}$ must be a sphere (equality in \eqref{inegalite_minkowski}, see Corollary \ref{coro_minkowski_axi_bonnesen}). Now, we show that $\Sigma \equiv \Sigma^{*}$ i.e. $\theta = \theta^{*}$. From \eqref{equ_area} and \eqref{eq_equality}, we have the equality: 
\[ \int_{0}^{L} s v(s)ds = \int_{0}^{L} s v^{*}(s)ds,  \]
where the map $v: s \mapsto v(s) = 1 - \cos \theta (s)$ has already been introduced. The above equality and an integration by parts yield to the following relation: 
\begin{equation} 
\label{eq_layer_cake}
\int_{0}^{L}  \left( \int_{s}^{L} v(c) dc \right) ds = \int_{0}^{L}  \left( \int_{s}^{L} v^{*}(c) dc \right) ds.
\end{equation}
Since $\mathbf{1}_{[s,L]}^{*} = \mathbf{1}_{[s,L]}$, the Hardy-Littlewood inequality implies that:
\[ \forall s \in [0,L], \quad \int_{s}^{L} v(c)dc = \int_{0}^{L} \mathbf{1}_{[s,L]}(c) v(c) dc \leqslant \int_{0}^{L} \mathbf{1}^{*}_{[s,L]}(c) v^{*}(c) dc = \int_{s}^{L} v^{*}(c)dc.  \]
Combining the above inequality and \eqref{eq_layer_cake}, we deduce that:
\[ \forall s \in [0,L], \quad \int_{s}^{L} v(c) dc = \int_{s}^{L} v^{*}(c) dc, \]
thus $(1- \cos [\theta^{*}]) = 1 - \cos [\theta]$ and $\theta = \theta^{*}$ on $[0,L]$. Hence, $\Sigma \equiv \Sigma^{*}$ and $\Sigma$ must be a sphere. Conversely, any sphere $\Sigma$ satisfies the equality $\int_{\Sigma} H dA = \sqrt{4 \pi A( \Sigma )}$, which concludes the proof of Theorem \ref{thm_inequality_axiconvex}.
\end{proof}

\section{Proof of Theorem \ref{thm_no_global_minimizer}}
\label{sec_thm_contrex}
In this section, we build two sequences of surfaces of constant area. The first one is not axisymmetric and its total mean curvature tends to $- \infty$ while the other one is axisymmetric and its total mean curvature tends to zero. Figures \ref{boule_trouee} et \ref{double_sphere} describe their respective constructions.

\subsection{Total mean curvature is not bounded from below}
We first compute the total mean curvature of a sphere of radius $R > 0$ where a neighbourhood of the north pole has been removed, and replaced by an internal sphere of small radius $\varepsilon > 0$. The two parts are glued so that the resulting surface referred to as $\Sigma_{\varepsilon}$ is an axisymmetric $C^{1,1}$-surface illustrated in Figure \ref{boule_trouee}. 

\begin{figure}[h!]
\centering
\includegraphics{./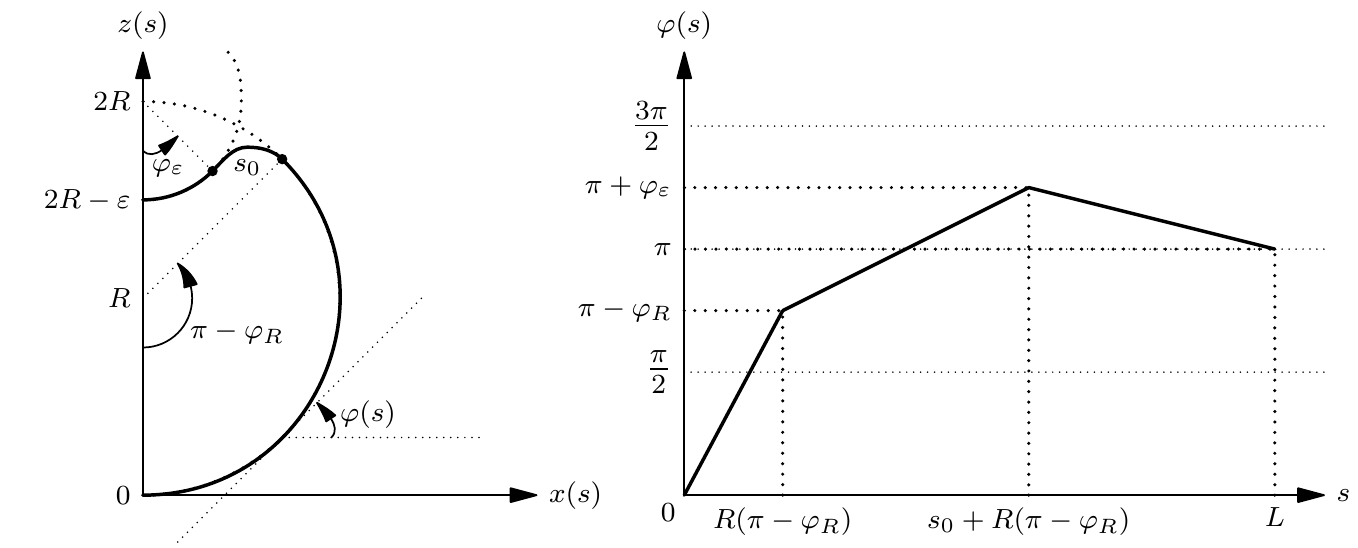}
\caption{the construction of the sequence of axisymmetric surfaces $(\Sigma_{\varepsilon})_{\varepsilon > 0}$.}
\label{boule_trouee}
\end{figure}

More precisely, let us fix $\varphi_{\varepsilon}=\frac{\pi}{2}\,-\varepsilon$ and let us consider the function $\varphi: [0,L] \rightarrow \mathbb{R}$ defined by:
\[ \varphi(s) = \left\lbrace \begin{array}{ll}
 \dfrac{s}{R}  & \mathrm{if}~ s \in [0,R(\pi-\varphi_{R})] \\
 & \\
  \dfrac{\varphi_{R} + \varphi_{\varepsilon}}{s_{0}} \left( s - R(\pi - \varphi_{R}) \right) + \pi - \varphi_{R}  & \mathrm{if}~ s \in [R(\pi - \varphi_{R}), s_{0} + R(\pi - \varphi_{R})] \\
& \\
 -\dfrac{1}{\varepsilon}(s-s_{0}-R(\pi - \varphi_{R}))+\pi + \varphi_{\varepsilon} & \mathrm{if} ~ s \in [s_{0} + R(\pi - \varphi_{R}),L], \\  
\end{array} \right. \]
with 
\[ \varphi_R, \ \varphi_{\varepsilon}=\frac{\pi}{2}\,-\varepsilon \in \left]0,\frac{\pi}{2}\right[, \quad s_0>0 \quad \mathrm{and} \quad L = \varepsilon \varphi_{\varepsilon} + s_{0} + R(\pi - \varphi_{R}). \]
In the above expression, there are three parameters $\varphi_{\varepsilon}$, $\varphi_R$ and $s_0$, but actually we will have to impose
two extra conditions (\ref{13bis}) and (\ref{14bis}) to express that $x(L)=0$ and $z(L)=2R-\varepsilon$.
The map $\varphi$ is continuous and piecewise linear, and satisfies \eqref{eq:01}, \eqref{eq:02}, \eqref{eq:03}. The surface $\Sigma_{\varepsilon}$ is obtained through formulas \eqref{parametrisation_X}, \eqref{x_z_expression} when $\theta$ is replaced by $\varphi$. The first part of the definition of $\varphi$ generates almost a sphere of radius $R > 0$ since $\varphi_{R}$ will be chosen small. The third part generates almost an internal half-sphere of radius $\varepsilon > 0$. The second part corresponds to the gluing of the two spheres and has a length $s_{0} > 0$. Let us note that $L > 0$ is the total length of the curve.
\bigskip

We compute $x(s) = \int_{0}^{s} \cos\varphi(t)dt $ and $z(s) = \int_{0}^{s} \sin \varphi(t)dt$ and taking into account that
the expression for the last interval describes a part of the sphere of radius $\varepsilon$, we get:
\[ x(s) = \left\lbrace \begin{array}{ll}
R \sin  \varphi(s) & \mathrm{if}~ s \in [0,R(\pi-\varphi_{R})] \\
 & \\
\left( R - \dfrac{s_{0}}{\varphi_{R} + \varphi_{\varepsilon}} \right) \sin \varphi_{R} + \dfrac{s_{0}}{\varphi_{R} + \varphi_{\varepsilon}} \sin \varphi(s) ~~~~~~ & \mathrm{if}~ s \in [R(\pi - \varphi_{R}), s_{0} + R(\pi - \varphi_{R})] \\
& \\
- \varepsilon \sin \varphi(s)  & \mathrm{if} ~ s \in [s_{0} + R(\pi - \varphi_{R}),L], \\  
\end{array} \right. \]
and also
\[ z(s) = \left\lbrace \begin{array}{ll}
R \left( 1 - \cos   \varphi(s) \right) & \mathrm{if}~ s \in [0,R(\pi-\varphi_{R})] \\
 & \\
R + \left( R - \dfrac{s_{0}}{\varphi_{R} + \varphi_{\varepsilon}} \right) \cos \varphi_{R} - \dfrac{s_{0}}{\varphi_{R} + \varphi_{\varepsilon}} \cos \varphi(s)  & \mathrm{if}~ s \in [R(\pi - \varphi_{R}), s_{0} + R(\pi - \varphi_{R})] \\
& \\
2R + \varepsilon \cos  \varphi(s)  & \mathrm{if} ~ s \in [s_{0} + R(\pi - \varphi_{R}),L]. \\  
\end{array} \right. \]
\bigskip

We express now continuity of $x(s)$ and $z(s)$ at $s=s_{0} + R(\pi - \varphi_{R})$. The first relation gives $s_{0}$ explicitly in terms of $\varphi_{R}$ and $\varphi_{\varepsilon}$. The second one gives an implicit relation between $\varphi_{R}$ and $\varphi_{\varepsilon}$. 
\begin{equation}\label{13bis}
\left( R - \dfrac{s_{0}}{\varphi_{R} + \varphi_{\varepsilon}} \right) \sin \varphi_{R} - \dfrac{s_{0}}{\varphi_{R} + \varphi_{\varepsilon}} \sin \varphi_{\varepsilon} = \varepsilon \sin \varphi_{\varepsilon} \quad \text{i.e.}\quad  s_{0} = (\varphi_{R} + \varphi_{\varepsilon}) \dfrac{R \sin \varphi_{R} - \varepsilon \sin \varphi_{\varepsilon}}{\sin \varphi_{R} + \sin \varphi_{\varepsilon} }, 
\end{equation}
and 
\begin{equation}\label{14bis}
R + \left( R - \dfrac{s_{0}}{\varphi_{R} + \varphi_{\varepsilon}} \right) \cos \varphi_{R} + \dfrac{s_{0}}{\varphi_{R} + \varphi_{\varepsilon}} \cos \varphi_{\varepsilon} = 2 R - \varepsilon \cos \varphi_{\varepsilon} .
\end{equation}
The last relation can be rewritten, using the first relation, in the following form: 
\[ \dfrac{(R+\varepsilon)\cos \varphi_{R} - R}{\sin \varphi_{R}} + \dfrac{(R+\varepsilon)\cos \varphi_{\varepsilon} - R}{\sin \varphi_{\varepsilon}} = 0. \]
To see that this relation can be satisfied, we introduce the map $f: x\in ]0,\frac{\pi}{2}[ \mapsto \frac{(R+\varepsilon) \cos x - R}{\sin x} $, which is smooth, decreasing and surjective. Hence, it is an homeomorphism on its image and the previous relation become with this notation:
\[ f(\varphi_{R}) + f(\varphi_{\varepsilon}) = 0 \Longleftrightarrow \varphi_{R} = f^{-1}(-f(\varphi_{\varepsilon})) .\] 
We recall that $\varphi_{\varepsilon}=\frac{\pi}{2}-\varepsilon$ and we get by a straightforward computation:
\[ f(\varphi_R)=R-R\epsilon+o(\epsilon). \]
Using the expression of $f$, we deduce that $\sin(\varphi_R)= \frac{\varepsilon}{R} + o(\varepsilon)$ and therefore, we obtain:
\begin{equation}
\label{devphiR}
 \varphi_R=\frac{\varepsilon}{R} + o(\varepsilon).
\end{equation}
Now, we can compute the total mean curvature and the area of the surface. We obtain:
\begin{equation}
\label{exp2}
\left\lbrace \begin{array}{l}
\displaystyle{\dfrac{1}{\pi} \int_{\Sigma_{\varepsilon}} H dA = \int_{0}^{L} \left( \sin\varphi(s)+\dot{\varphi}(s)x(s) \right) ds = 4R - \left( 2- \dfrac{\pi}{2} \right) \varepsilon + o(\varepsilon) } \\
\\
 \displaystyle{ \dfrac{A(\Sigma_{\varepsilon})}{2 \pi} = \int_{0}^{L} x(s) ds = 2 R^{2} + \dfrac{\varepsilon^{2}}{2} + o(\varepsilon^{2}).}  \\
\end{array} \right.
\end{equation}
We can notice in the above expressions a first term which is the contribution of the sphere of radius $R$ and a second one due to the half-sphere of radius $\varepsilon$ and the gluing. Note that the gluing has some first order impact on these relations, which is not obvious at first sight. We are now in position to prove the first part of Theorem \ref{thm_no_global_minimizer}.

\begin{proof}[Proof of Theorem \ref{thm_no_global_minimizer}]
We decide to perform many perturbations of that kind all around the sphere. Notice that, for $\varepsilon$ small enough, the perturbation we defined is contained in a ball of radius $\frac{3}{2}\varepsilon$ centred at the north pole. Thus it suffices to count how many such disjoint small balls we can put on the surface of the sphere of radius $R$. We will also use the fact that each perturbation makes a contribution for the total mean curvature and the area as $- \pi(2- \frac{\pi}{2}) \varepsilon$ and $\pi \varepsilon^2$ (respectively) at first order, according to (\ref{exp2}). We will denote by $N_\varepsilon$ the number of perturbations. We first divide the surface of the sphere in slices $S_k$ of latitude between $\frac{2\varepsilon}{R}(2k-1)$ and $\frac{2\varepsilon}{R}(2k+1)$, $k\in \{-K_\varepsilon \ldots,K_\varepsilon\}$ with $K_\varepsilon$ the integer part of {$\frac{\pi R}{8\varepsilon} -\frac{1}{2}$.} The (geodesic) width of each slice is $4\varepsilon$. Now the slice $S_k$ has a mean radius which is {$R\cos(\frac{4 k \varepsilon}{R})$}, thus a perimeter which is {$2\pi R\cos(\frac{4 k \varepsilon}{R})$} and therefore, we can put on it {$[2\pi R\cos(\frac{4k\varepsilon}{R})/4\varepsilon]$ patches of diameter close to $4\varepsilon$, where $[.]$ refers to the integer part}. On each patch, we can center a ball of radius {$\frac{3\varepsilon}{2}$}. Consequently, the total number of patches where we can put disjoint ball of diameter {$3\varepsilon$ is given by:
\begin{equation}
\label{number}
N_{\varepsilon}=\sum_{k=-K_\varepsilon}^{K_\varepsilon -1} \left[ \dfrac{\pi R}{2\varepsilon} \cos \left( \dfrac{4 k \varepsilon}{R} \right) \right].
\end{equation}
Using that $K_\varepsilon$ satisfies 
$$\frac{\pi R}{8\varepsilon} -\frac{3}{2} \leqslant K_\varepsilon \leqslant \frac{\pi R}{8\varepsilon}-\frac{1}{2},$$
we deduce from \eqref{number} that
\begin{equation}\label{taktak01}
N_{\varepsilon}=\frac{\pi R^2}{4\varepsilon^2} + O\left(\frac 1\varepsilon \right).
\end{equation}

Then, the resulting $C^{1,1}$-surface obtained this way (written again $\Sigma_{\varepsilon}$) is compact simply-connected (and not axisymmetric). Moreover, we deduce from \eqref{taktak01}:}
\[ \left\lbrace \begin{array}{l} 
 \displaystyle \int_{\Sigma_{\varepsilon}} H dA  =  4 \pi R - \pi \left( 2 - \dfrac{\pi}{2} \right) N_\varepsilon \varepsilon+ o(N_\varepsilon \varepsilon) 
 = -  \left( 2 - \dfrac{\pi}{2} \right)\frac{\pi^2 R^2}{4\varepsilon} + o\left(\frac 1\varepsilon\right),\\
\displaystyle{ A(\Sigma_{\varepsilon})  = 4 \pi R^{2} + \pi N_\varepsilon\varepsilon^{2} + o(N_\varepsilon\varepsilon^{2}) = 4 \pi R^{2} + \frac{\pi^2 R^2}{4}\, +o(1) }. \\ 
\end{array} \right. \]
Finally, we make a rescaling of $\Sigma_{\varepsilon}$ such that its area is exactly the required area $A_{0}$. First, we set $R > 0$ such that $4 \pi R^{2} = A_{0}$, i.e. the sphere of radius $R$ has area $A_{0}$. Then we set:
\[ t_{\varepsilon} = \sqrt{\dfrac{A_{0}}{A(\Sigma_{\varepsilon})}} = \left(1+\frac{\pi}{16} + o(1)  \right)^{-1/2}. \]
Hence, the surface $t_{\varepsilon} \Sigma_{\varepsilon}$ has area $A_{0}$ and we have:
\[ \int_{t_{\varepsilon} \Sigma_{\varepsilon}} H dA = t_{\varepsilon} \left( \int_{\Sigma_{\varepsilon}} H dA \right)  
= - \left(1+\frac{\pi}{16} \right)^{-1/2}   \left( 2 - \dfrac{\pi}{2} \right)\frac{\pi^2 R^2}{4\varepsilon} + o\left(\frac 1\varepsilon\right).  \]
By letting $\varepsilon$ tend to zero, we thus obtain the first part of Theorem \ref{thm_no_global_minimizer}. The total mean curvature, even constrained by area, is not bounded from below. 
\end{proof}

\subsection{A sequence converging to a double sphere}
We now detail the construction of a sequence $(\widetilde{\Sigma}_{\varepsilon})_{\varepsilon > 0}$ of axisymmetric $C^{1,1}$-surfaces of constant area whose total mean curvature tends to zero, which will end the proof of Theorem \ref{thm_no_global_minimizer}.
\bigskip
\begin{figure}[ht!]
\centering
\includegraphics{./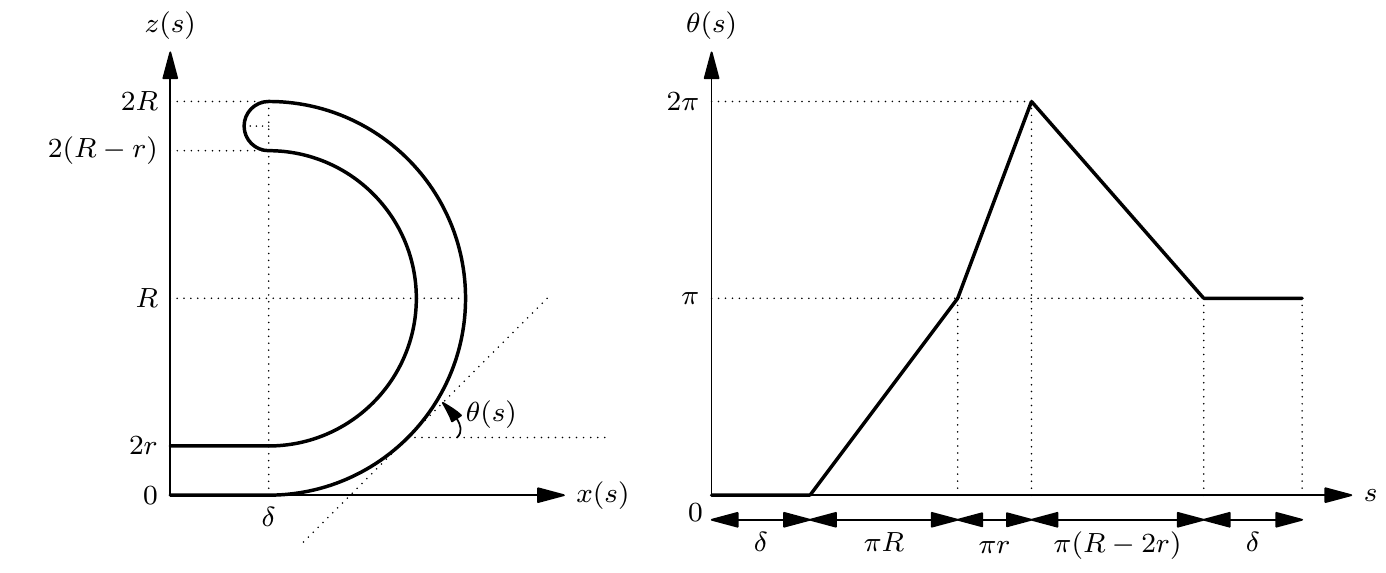}\caption{the construction of the sequence of axisymmetric surfaces $(\widetilde{\Sigma}_{\varepsilon})_{\varepsilon > 0}$.}
\label{double_sphere}
\end{figure}

We consider the sequence of surfaces $(\widetilde{\Sigma}_{\varepsilon})_{\varepsilon > 0}$ described in Figure \ref{double_sphere}. They consist in two spheres of radius $R > 0$ and $R-2r>0$ glued together at a distance $\delta > r > 0$ of the axis of revolution and such that the generating map $\theta:[0,L] \rightarrow \mathbb{R}$ is piecewise linear. More precisely, we have:
\[ \theta(s) = \left\lbrace \begin{array}{ll}
0 & \mathrm{if}~ s \in [0,\delta] \\
& \\ 
 \dfrac{1}{R}(s-\delta)  & \mathrm{if}~ s \in [\delta, \delta + \pi R ] \\
 & \\
  \dfrac{1}{r}(s-\delta-\pi R) + \pi  & \mathrm{if}~ s \in [\delta + \pi R, \delta + \pi(R+r)] \\
& \\
 -\dfrac{1}{R-2r}(s-\delta-\pi R-\pi r) + 2 \pi & \mathrm{if} ~ s \in [ \delta + \pi(R+r), \delta + \pi(2R-r)] \\  
  & \\
  \pi & \mathrm{if} ~ s \in [ \delta + \pi(2R-r),L], \\ 
\end{array} \right. \]
where $L = 2 \delta +  \pi(2R-r) > 0$ is the total length of the generating curve. Then, a computation of $x(s) = \int_{0}^{L} \cos \theta(t) dt $ and $z(s) = \int_{0}^{s} \sin  \theta(t) dt$ gives the following relations:
\[ x(s) =  \left\lbrace \begin{array}{ll}
 s & \mathrm{if}~ s \in [0,\delta] \\
 \delta + R \sin  \theta(s)  & \mathrm{if}~ s \in [ \delta, \delta + \pi R] \\
 \delta + r \sin  \theta(s) & \mathrm{if}~ s \in [ \delta + \pi R, \delta +\pi (R+r)] \\
 \delta - (R-2r) \sin   \theta(s) & \mathrm{if}~ s \in [ \delta + \pi (R+ r), \delta +\pi (2R- r)] \\
 L-s & \mathrm{if} ~ s \in [\delta +\pi (2R- r),L], \\  
\end{array} \right. \]
and also
\[ z(s) = \left\lbrace \begin{array}{ll}
  0 & \mathrm{if}~ s \in [0,\delta] \\
  R \left( 1 - \cos   \theta(s)  \right) & \mathrm{if}~ s \in [ \delta, \delta + \pi R] \\
  2R - r (1+ \cos \theta(s))  & \mathrm{if}~ s \in [ \delta + \pi R, \delta + \pi (R+r)] \\
 2(R -r) - (R-2r)(1 - \cos\theta(s))  & \mathrm{if}~ s \in [ \delta + \pi (R+ r),  \delta + \pi (2R- r)] \\
  2r & \mathrm{if} ~ s \in [ \delta + \pi (2R- r),L]. \\
\end{array} \right. \]
Finally, we obtain the following expressions:
\[ \left\lbrace \begin{array}{l}
\displaystyle{ \int_{\widetilde{\Sigma}_{\varepsilon}} H dA = \pi \int_{0}^{L} \left( \sin \theta(s) +\dot{\theta}(s)x(s) \right)ds = 4 \pi r + \pi^{2} \delta } \\
\\
\displaystyle{ A(\widetilde{\Sigma}_{\varepsilon}) = 2\pi \int_{0}^{L} x(s) ds = 2\pi \delta^{2} + 2\pi^2 \delta(2R-r) + 4\pi \left( R^{2} -  r^{2} +  (R-2r)^{2} \right).  } \\
\end{array} \right. \]
Now, we impose that $\delta = 2r > r > 0$. The last relation is thus a second order polynomial in $R > 0$ and for each (small) $r$, there exists a unique positive root $R_r$ such that $A(\widetilde{\Sigma}_{\varepsilon}) = A_{0}$. Moreover, $R_r$ converges to $R_0=\sqrt{A_0/8\pi}$ when $r \rightarrow 0$. Then, we see that the total mean curvature converges to zero from above as $r $ tends to $0^{+}$, which concludes the proof of Theorem \ref{thm_no_global_minimizer}. 
\section{The sphere is the unique smooth critical point}
\label{sec_sphere}
According to Theorem \ref{thm_no_global_minimizer}, the sphere is not a global minimizer of \eqref{min_intH} in the class of $C^{1,1}$-surfaces. However, in this section, we establish that the sphere is always a smooth local minimizer. Then, we compute the first variation of total mean curvature and area to obtain the Euler-Lagrange equation associated to \eqref{min_intH}. We deduce that the sphere is the unique smooth critical point of \eqref{min_intH}.
\begin{remark}
\label{coro_sphere_local_minimizer}
Since the ball of radius $R$ is a strictly convex set whose boundary has principal curvatures  everywhere equal to $1/R$, any perturbation of class $C^{2}$ of the sphere
yields a perturbation of class $C^0$ of its curvatures and then the perturbed domain remains convex. From \eqref{inegalite_minkowski}, the sphere is a global minimizer of \eqref{min_intH} among compact inner-convex $C^{2}$-surfaces so the sphere is obviously a local minimizer of total mean curvature for small perturbations of class $C^2$. 
\end{remark}

\begin{proposition}[First variation of total mean curvature and area]
\label{thm_derivee_intH}
Assume that $\Sigma$ is a compact simply-connected $C^{2}$-surface. Consider a smooth vector field $ \mathbf{V} : \mathbb{R}^{3} \rightarrow \mathbb{R}^{3}$ and the family of maps $\phi_{t} : \mathbf{x} \in \Sigma \mapsto \mathbf{x} + t \mathbf{V}(\mathbf{x})$. Then, we have:
\[ \dfrac{d}{dt} \left( \int_{\phi_{t}(\Sigma)} 1 dA \right)_{t = 0} = \int_{\Sigma} 2H \left( \mathbf{V} \cdot \mathbf{N} \right) dA, \]
where $\mathbf{N} : \Sigma \rightarrow \mathbb{S}^{2}$ refers to the Gauss map representing the outer unit normal field of $\Sigma$. Moreover, if $\Sigma$ is a compact simply-connected $C^{3}$-surface, then we also get:
\[ \dfrac{d}{dt} \left( \int_{\phi_{t}(\Sigma)} H dA \right)_{t = 0} = \int_{\Sigma} K \left( \mathbf{V} \cdot \mathbf{N} \right) dA, \]
where $K=\kappa_{1} \kappa_{2}$ refers to the Gaussian curvature.
\end{proposition}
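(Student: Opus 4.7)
My plan is to split $\mathbf{V}$ into tangential and normal components, reduce to the purely normal case via the divergence theorem on the closed surface $\Sigma$, and then carry out the standard linearizations of the first and second fundamental forms in a local parametrization.

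Writing $\mathbf{V} = \mathbf{V}^{T} + f\mathbf{N}$ with $f = \mathbf{V}\cdot\mathbf{N}$, the contribution of the tangential part to $\tfrac{d}{dt}\big|_{t=0}\int_{\phi_{t}(\Sigma)} g\,dA$ (for $g$ equal to $1$ or $H$) takes the form $\int_{\Sigma}\mathrm{div}_{\Sigma}(g\mathbf{V}^{T})\,dA$, which vanishes on the boundaryless surface $\Sigma$ by Stokes' theorem. Thus I may assume $\mathbf{V} = f\mathbf{N}$ throughout.

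For the area formula, in a local parametrization $X$ of $\Sigma$ I would form the first fundamental form $g_{ij}(t)$ of $\phi_{t}\circ X$ and differentiate at $t=0$. Using $\langle X_{i},\mathbf{N}\rangle = 0$ and writing the second fundamental form as $B_{ij} = \langle X_{i},\mathbf{N}_{j}\rangle$ (in the sign convention making $\mathrm{tr}_{g}B = 2H$ with the outer normal), a direct expansion gives $\dot g_{ij}(0) = 2fB_{ij}$, hence $\partial_{t}\sqrt{\det g}\big|_{t=0} = 2Hf\sqrt{\det g}$, from which the first formula follows by integration.

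For the total mean curvature formula, I would use the analogous linearization of the second fundamental form under the normal deformation, which produces the classical identity $\delta H = \tfrac{1}{2}\Delta_{\Sigma}f - \tfrac{1}{2}|B|^{2} f$, with $|B|^{2} = \kappa_{1}^{2}+\kappa_{2}^{2}$. Combining this with the previous area element variation gives
\[\delta(H\,dA) = \Bigl(\tfrac{1}{2}\Delta_{\Sigma}f - \tfrac{1}{2}|B|^{2}f + 2H^{2}f\Bigr)\,dA = \Bigl(\tfrac{1}{2}\Delta_{\Sigma}f + Kf\Bigr)\,dA,\]
where the crucial algebraic step is the two-dimensional identity $|B|^{2} = (\kappa_{1}+\kappa_{2})^{2} - 2\kappa_{1}\kappa_{2} = 4H^{2} - 2K$, special to surfaces. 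Integrating over $\Sigma$ eliminates the Laplacian term by Stokes and delivers the second formula.

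The main obstacle is the consistent bookkeeping of sign conventions for $B$ and $\mathbf{N}$: a single sign error (for instance confusing $B_{ij} = \langle X_{ij},\mathbf{N}\rangle$ with $-\langle X_{ij},\mathbf{N}\rangle$, or outer versus inner normal) would flip one of the two formulas. I would fix the signs once and for all by a sphere check (unit sphere, $f=1$, yielding $\delta A = 8\pi$ and $\delta H = -1$, consistent with $|B|^{2}=2$ and $\Delta_{\Sigma} 1 = 0$). The $C^{3}$ hypothesis in the second part is exactly what makes $H$ differentiable along the flow so that $\delta H$ is defined pointwise almost everywhere and the above manipulations are justified.
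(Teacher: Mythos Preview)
Your approach is correct and essentially coincides with the paper's: both combine the area-element variation $\delta(dA)=2Hf\,dA$ with the variation of $H$ (a Laplacian term plus $-\tfrac{1}{2}|B|^{2}f$) and then invoke the two-dimensional identity $4H^{2}-|B|^{2}=2K$ so that only $\int_{\Sigma}Kf\,dA$ survives after integrating over the closed surface. The only difference is that the paper quotes these intermediate variation formulas from references (Dogan--Nochetto, Henrot--Pierre) rather than deriving them in a local chart, and in their convention the Laplacian term carries the sign $-\tfrac{1}{2}\Delta_{\Sigma}V$; as you anticipate, this sign is immaterial once integrated.
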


\begin{proof}
The first variation of area is classical, see for example \cite[Corollary 5.4.16]{HenrotPierre}. Concerning the first variation of total mean curvature, we refer to \cite[Theorem 2.1]{DoganNochetto} or \cite[Theorem 5.4.17]{HenrotPierre}. Using the notation of \cite{DoganNochetto} i.e. $J(\Sigma) = \int_{\Sigma} H dA$, we get in the case where $\psi(x,\Sigma) $ represents any extension of the scalar mean curvature $H$, and $\psi'(\Omega;\mathbf{V})$ its shape derivative in the direction $\mathbf{V}$: 
\[ dJ(\Sigma;\mathbf{V}) = \int_{\Sigma} \psi'(\Omega;\mathbf{V})\vert_{\Sigma} dA + \int_{\Sigma} (\partial_{\nu}\psi + 2H \psi)VdA.  \]
Now, Lemma 3.1 in \cite{DoganNochetto} states $\psi'(\Sigma;\mathbf{V}) = - \frac{1}{2} \Delta_{\Sigma} V$, where $V=\mathbf{V} \cdot \mathbf{N}$ and $\Delta_{\Sigma} = \mathrm{div}_{\Sigma} \nabla_{\Sigma}$ is the usual Laplace-Beltrami operator. Moreover, from \cite[Lemma 3.2]{DoganNochetto}, and since $\Sigma$ is $C^3$, we get $\partial_{\nu} H = - \frac{1}{2}(\kappa_{1}^{2} + \kappa_{2}^{2}) = -2 H^{2} + \kappa_{1} \kappa_{2}$. Therefore we deduce:
\[ dJ(\Sigma;\mathbf{V}) = - \frac{1}{2}\int_{\Sigma} \Delta_{\Sigma} V dA + \int_{\Sigma}(-2 H^{2} +  \kappa_{1} \kappa_{2} + 2 H^{2})VdA = \int_{\Sigma} \kappa_{1} \kappa_{2} V dA,  \] 
which gives the announced result and concludes the proof of Proposition \ref{thm_derivee_intH}.  
\end{proof}

\begin{theorem}
\label{coro_sphere_point_critique}
Within the class of compact simply-connected $C^{3}$-surfaces, if the area is constrained to be equal to a fixed positive number, then the corresponding sphere is the unique critical point of the total mean curvature.
\end{theorem}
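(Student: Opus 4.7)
The plan is to derive the Euler--Lagrange equation associated with the constrained problem \eqref{min_intH} and show that its only solution among closed simply-connected $C^3$-surfaces is a round sphere. I would combine the two first-variation formulas of Proposition \ref{thm_derivee_intH} via a Lagrange multiplier: for every critical point $\Sigma$ of $\int_\Sigma H \, dA$ subject to $A(\Sigma) = A_0$, there exists $\lambda \in \mathbb{R}$ such that
\[
\int_\Sigma \bigl(K - 2\lambda H\bigr)(\mathbf{V}\cdot\mathbf{N}) \, dA = 0
\]
for every smooth vector field $\mathbf{V}$ on $\mathbb{R}^3$. Any $\varphi \in C^\infty(\Sigma)$ is realized as $\mathbf{V}\cdot\mathbf{N}$ by extending $\varphi \mathbf{N}$ to an ambient smooth field, so the fundamental lemma of the calculus of variations yields the pointwise Euler--Lagrange equation $K = 2\lambda H$ on $\Sigma$.

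Next I would pin down $\lambda$ via classical integral identities. Gauss--Bonnet gives $\int_\Sigma K \, dA = 4\pi$ (since $\Sigma$ is topologically a sphere), hence $\int_\Sigma H \, dA = 2\pi/\lambda$; in particular $\lambda \neq 0$. Applying the tangential divergence theorem to the position vector $x$ and, in a second step, to $S(x^T)$ (with $S$ the shape operator and $x^T$ the tangential component of $x$) gives the two Minkowski-type identities
\[
\int_\Sigma H \langle x,\mathbf{N}\rangle \, dA = A(\Sigma), \qquad \int_\Sigma K\langle x, \mathbf{N}\rangle \, dA = \int_\Sigma H \, dA,
\]
which, combined with $K = 2\lambda H$, yield $\int_\Sigma H \, dA = 2\lambda A_0$. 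Together with Gauss--Bonnet this forces $\lambda^2 A_0 = \pi$, determining $|\lambda|$ explicitly.

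The sign of $\lambda$ follows from a convexity argument. After translating the origin inside the bounded component of $\mathbb{R}^3 \setminus \Sigma$, a point $p_0 \in \Sigma$ maximizing $|p|^2$ satisfies, by negative semi-definiteness of the Hessian of $|p|^2/2$ restricted to $\Sigma$, $\kappa_1(p_0), \kappa_2(p_0) \geq 1/|p_0| > 0$; hence $H(p_0), K(p_0) > 0$ and $\lambda = K(p_0)/(2H(p_0)) > 0$. The arithmetic--geometric mean inequality $H^2 \geq K$ then rewrites as $H(H - 2\lambda) \geq 0$, confining $H$ pointwise to the disconnected set $(-\infty, 0] \cup [2\lambda, +\infty)$. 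Continuity of $H$, connectedness of $\Sigma$, and $H(p_0) > 0$ force $H \geq 2\lambda$ everywhere on $\Sigma$.

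Finally, integrating this pointwise inequality gives $\int_\Sigma H \, dA \geq 2\lambda A_0$, which is already an equality, so $H \equiv 2\lambda$ on $\Sigma$. Then $K = 4\lambda^2 = H^2$ pointwise, i.e. $(\kappa_1 - \kappa_2)^2 \equiv 0$ and $\Sigma$ is totally umbilic; the classical theorem on closed totally umbilic surfaces in $\mathbb{R}^3$ forces $\Sigma$ to be a sphere, whose radius is pinned down by the area constraint as $\mathbb{S}_{A_0}$. The main subtlety lies in the passage from the scalar relation $K = 2\lambda H$ to full umbilicity: the decisive ingredient is the gap $(0, 2\lambda)$ in admissible values of $H$ created by AM--GM, which connectedness of $\Sigma$ prevents $H$ from straddling.
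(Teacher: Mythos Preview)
Your argument is correct and tracks the paper's proof closely: the Euler--Lagrange relation $K=2\lambda H$, Gauss--Bonnet, the two Minkowski integral identities, the positivity of $\lambda$ via a farthest-point argument, and the connectedness step forcing $H\ge 2\lambda$ are all exactly what the paper does. The one genuine difference is the endgame. Once $H\ge 2\lambda$ and $\int_\Sigma H\,dA=2\lambda A_0$ are in hand, you integrate the pointwise inequality to conclude $H\equiv 2\lambda$, hence $K=H^2$, hence $\Sigma$ is totally umbilic and therefore a sphere. The paper instead first observes $K\ge 4\lambda^2>0$, so $\Sigma$ is an ovaloid (hence inner-convex), then computes $\int_\Sigma H\,dA=\sqrt{4\pi A(\Sigma)}$ and invokes the equality case of Minkowski's inequality \eqref{inegalite_minkowski}. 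Your route is slightly more self-contained, needing only the classical umbilic theorem rather than the characterization of equality in \eqref{inegalite_minkowski}; the paper's route, by contrast, ties the result back to the central inequality of the article.
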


\begin{proof}
Consider any critical point $\Sigma$ of \eqref{min_intH} which is a compact simply-connected $C^{3}$-surface. From Proposition \ref{thm_derivee_intH}, there exists a Lagrange multiplier $\lambda \in \mathbb{R}$ such that $K = 2 \lambda H$. Let us observe that $\lambda \neq 0$ otherwise $K = 0$ which is not possible (indeed, any compact surface has a point where $K > 0$ \cite[Exercise 3.42]{MontielRos}). Now assume that $\lambda < 0$. Then, from the relation $H^{2} = (\frac{\kappa_{1}+ \kappa_{2}}{2})^{2} \geqslant \kappa_{1} \kappa_{2} = K$, we get from the continuity of the scalar mean curvature and the connectedness of $ \Sigma$ that either $H \leqslant 2 \lambda$ or $H \geqslant 0$. But this cannot happen since there is a point where $2 \lambda H = K > 0$ i.e. $H < 0$ and a point where $H \geqslant 0$. To see this last point, consider any plane far enough from the compact surface $\Sigma$ and move it in a fixed direction. At the first point of contact between this plane and the surface $\Sigma$, it is locally convex i.e. $\kappa_{1} \geqslant 0$ and $\kappa_{2} \geqslant 0$. We deduce that at this point $H\geqslant 0$. Therefore, $\lambda$ must be non-negative. In the same way, we prove that $H^{2} \geqslant K = 2 \lambda H$ impose that $H \geqslant 2 \lambda $ everywhere and also that $K \geqslant 4 \lambda^{2} > 0$. Hence, $\Sigma$ is an ovaloid, i.e. a compact simply-connected $C^{2}$-surface with $K > 0$, so its inner domain is a convex body \cite[Theorem 6.1]{MontielRos}.
\bigskip

Integrating the relation $2 \lambda H = K$, we get $2 \lambda \int_{\Sigma} H dA = \int_{\Sigma}K dA = 4 \pi$, the last relation coming from the Gauss Bonnet Theorem \cite[Theorem 8.38]{MontielRos}. Now, multiply the relation $2 \lambda H = K$ by the number $X \cdot \mathbf{N}(X)$, where $X$ refer to the position of any point on the surface and $\mathbf{N}$ the outer unit normal field. Integrating over $\Sigma$ and using \cite[Theorem 6.11]{MontielRos} give the following identity:
\[   A(\Sigma) = \int_{\Sigma} H X \cdot \mathbf{N}(X)dA = \dfrac{1}{2 \lambda} \int_{\Sigma} K X \cdot \mathbf{N}(X) dA = \dfrac{1}{2 \lambda} \int_{\Sigma} H dA = \frac{4\pi}{4\lambda^2}.   \]   
Consequently, we obtain $\lambda = \sqrt{\pi/ A(\Sigma)}$ and $\int_{\Sigma} H dA = \sqrt{4 \pi A(\Sigma)}$. To conclude, we apply the equality case in Minkowski inequality (\ref{inegalite_minkowski}): $\Sigma$ has to be a sphere as required. 
\end{proof}

\begin{remark}
In the proof of Proposition \ref{coro_sphere_point_critique}, we show that when the Gaussian curvature $K$ and the mean curvature $H$ are
proportional, the surface has to be a sphere. We only need $C^2$-regularity for this part. This result can be seen as a particular case of Alexandrov's uniqueness Theorem which
deals with the similar question where a relation involving $H$ and $K$ holds. Usually, more regularity is required, see e.g. \cite[Exercise 3.50]{MontielRos} and 
\cite[Appendix]{HartmanWintner}.
\end{remark}

\section{Proof of Theorem \ref{thm_min_int_absH}}
\label{sec_thm_absH}
We consider here any axisymmetric $C^{1,1}$-surface $\Sigma \in \mathcal{A}_{1,1}$ generated by an admissible Lipschitz continuous map $\theta: [0,L] \rightarrow \mathbb{R}$, where $L>0$ refers to the total length of the generating curve. We refer to Section \ref{sec_not} for precise definitions. The idea is to use again a certain rearrangement of $\theta$:
\[  \forall s \in [0,L], ~~ \theta^{\bigstar}(s) = \left\lbrace  \begin{array}{lll} \theta(s) - 2 k \pi & \mathrm{if} & \theta(s) \in [2 k \pi, (2k+1) \pi[, \quad k \in \mathbb{Z} \\
& & \\
2 k \pi - \theta(s) & \mathrm{if} & \theta(s) \in [(2k-1)\pi, 2 k \pi[, \quad k \in \mathbb{Z}. \\
\end{array} \right. \] 

As shown in Figure \ref{another-rearrangement}, it consists in reflecting all parts of the range of $\theta$ which are outside the interval $[0,\pi]$ inside it. From a geometrical point of view, it is like unfolding the surface to make it inner-convex in any direction orthogonal to the axis of revolution.
\begin{figure}[ht!]
\centering
\includegraphics{./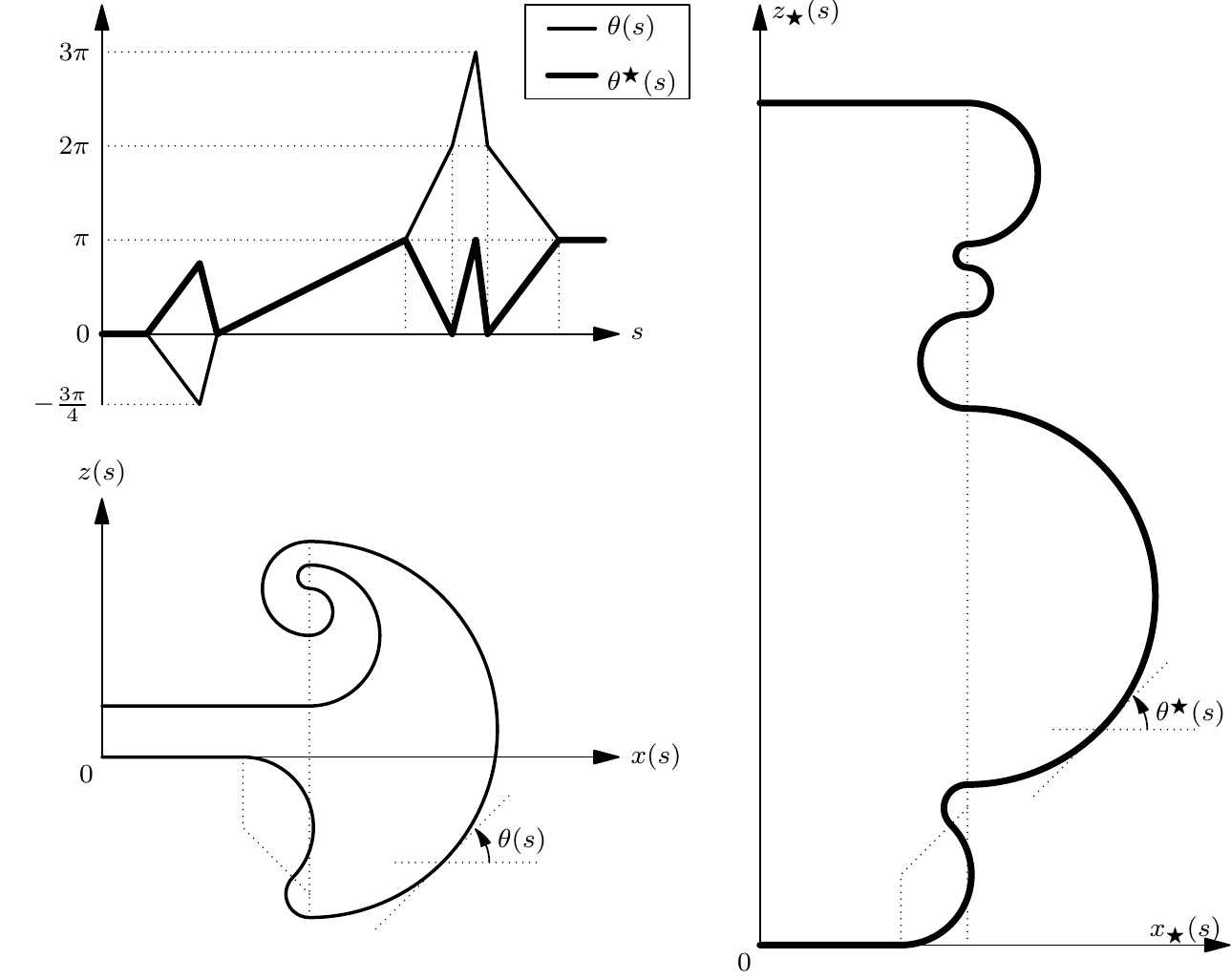}\caption{the rearrangement $\theta \mapsto \theta^\bigstar$ and the corresponding axisymmetric surfaces.}
\label{another-rearrangement}
\end{figure}

As in the proof of Theorem \ref{thm_inequality_axiconvex}, this one is divided into three steps:
\begin{enumerate}
\item We show that $\theta^{\bigstar}$ is generating an axiconvex $C^{1,1}$-surface $\Sigma^{\bigstar} \in \mathcal{A}_{1,1}^{+}$.
\item We establish that:
\[ \int_{\Sigma}\vert H \vert dA \geqslant \int_{\Sigma^{\bigstar}} H dA \geqslant \sqrt{4 \pi A(\Sigma^{\bigstar})} = \sqrt{4 \pi A(\Sigma)}. \]
\item We study the equality case.
\end{enumerate}   

\begin{proof}[Proof of Theorem \ref{thm_min_int_absH}]
\textbf{Step one:} $\Sigma^\bigstar \in \mathcal{A}_{1,1}^{+}$.
\bigskip

The map $\theta^\bigstar$ is Lipschitz continuous and valued in $[0, \pi]$ by construction. From Proposition \ref{condition_iii}, we have to check Relations \eqref{eq:01}, \eqref{eq:02} and \eqref{eq:03}. The first one comes directly from the definition of $\theta^\bigstar$. The second and third ones come from the odd and even parity of the cosine and sine functions. Indeed, observe that:
\[ \forall s \in [0,L], ~~ \left\lbrace \begin{array}{l}  
\displaystyle{ x_{\bigstar}(s) = \int_{0}^{s} \cos\theta^\bigstar(t)dt = \int_{0}^{s} \cos\theta(t)dt = x(s) } \\
\\
\displaystyle{ z_{\bigstar}(s) = \int_{0}^{s} \sin\theta^\bigstar(t)dt = \int_{0}^{s} \vert \sin\theta(t) \vert dt \geqslant z(s).} \\
\end{array} \right. \] 
Hence, we have $z_{\bigstar}(L) \geqslant z(L) > 0$, $x_{\bigstar}(L) = x(L) = 0$, and $x_{\bigstar}(s) = x(s) > 0$ for any $s \in ]0,L[$.
\bigskip

\noindent \textbf{Step 2:} comparing the total mean curvature and the area of $\Sigma$ and $\Sigma^\bigstar$.
\bigskip

Concerning the area, the equality is straightforward:
\[ A(\Sigma^\bigstar) = 2 \pi \int_{0}^{L} x_{\bigstar}(s) ds = 2 \pi \int_{0}^{L} x(s) ds = A(\Sigma). \]
Then, we have:
\[  \forall s \in [0,L], ~~ \sin\theta^\bigstar(s) + \dot{\theta}^\bigstar(s) x_{\bigstar}(s) = \left\lbrace  \begin{array}{ll} \sin\theta(s) + \dot{\theta}(s) x(s) & \mathrm{if} ~ \theta(s) \in [2 k \pi, (2k+1) \pi[,  k \in \mathbb{Z} \\
&  \\
-\sin\theta(s) - \dot{\theta}(s) x(s) & \mathrm{if} ~ \theta(s) \in [(2k-1)\pi, 2 k \pi[,  k \in \mathbb{Z}. \\
\end{array} \right. \] 
Consequently, we deduce that:
\[ \begin{array}{rcl} 
\displaystyle{ \int_{\Sigma} \vert H \vert dA} & = & \displaystyle{ \pi \int_{0}^{L} \vert \sin\theta(s) + \dot{\theta}(s) x(s) \vert ds \geqslant \pi \int_{0}^{L} \left( \sin\theta^\bigstar(s) + \dot{\theta}^{\bigstar}(s) x_{\bigstar}(s) \right) ds} \\
& & \\
& \geqslant & \displaystyle{ \int_{\Sigma_\bigstar} H dA \geqslant \sqrt{4 \pi A(\Sigma^\bigstar)} = \sqrt{4 \pi A(\Sigma)},} \\
\end{array} \]
where the last inequality comes from Theorem \ref{thm_inequality_axiconvex} applied to the axiconvex $C^{1,1}$-surface $\Sigma^\bigstar$.
\bigskip

\noindent \textbf{Step 3:} the equality case.
\bigskip

If we have equality in the above relation, it means that $\Sigma^\bigstar$ is a sphere from the equality case of Theorem \ref{thm_inequality_axiconvex}. Therefore, we have: $\theta^\bigstar(s) = \frac{\pi}{L}s$. We prove by contradiction that $\theta $ is valued in $ [0, \pi]$ which ensures from definition that $\theta = \theta^\bigstar$ i.e. $\Sigma$ is a sphere. Assume that there exists $s_{0} \in ]0,L[$ such that $\theta(s_{0}) < 0$. From the continuity of $\theta$ and the boundary conditions $\theta(0) = 0$, there exists $s_{1} \in ]0,L[$ such that $\theta(s_{1}) \in ]-\pi,0[$. Then, from the definition of $\theta^\bigstar$, $\theta(s_1)=-\theta^\bigstar(s_1)=-\frac{\pi}{L} s_1$ and by the Lipschitz continuity of $\theta$, we have:
\[ \dfrac{\theta(L)-\theta(s_{1})}{L-s_{1}} = \dfrac{\pi}{L} \dfrac{L+s_{1}}{L-s_{1}} \leqslant \Vert \dot{\theta} \Vert_{L^{\infty}(0,L)} = \Vert \dot{\theta}^\bigstar \Vert_{L^{\infty}(0,L)} = \dfrac{\pi }{L},   \]
Hence, the above inequality gives $L + s_{1} \leqslant L-s_{1}$ which is not possible since $s_{1}>0$. Let us now assume that there exists $s_{0} \in ]0,L[$ such that $\theta(s_{0}) > \pi$. More precisely, since $\theta(0)=0$, let us consider the first point $s_2\in ]0,L[$ such that $\theta(s_2)=\pi$. Since $0\leq \theta(s)<\pi$ for any $s<s_2$; we have by definition $\theta(s)=\theta^\bigstar(s)=\frac{\pi}{L} s$ for any $s<s_2$. But, passing to the limit $s \rightarrow s_2$, this leads to $\theta^\bigstar(s_2)=\pi \Leftrightarrow s_{2} = L$, which is not possible. To conclude, we proved that $\theta $ is valued in $[0,\pi]$. Hence, we have $\theta^\bigstar = \theta$ so $\Sigma$ must be a sphere. Conversely, any sphere satisfies the equality in \eqref{inegalite_minkowski}, which concludes the proof of Theorem \ref{thm_min_int_absH}.
\end{proof}

\section{Appendix: a proof of Minkowski's Theorem in the axisymmetric case}
In this section we give a short proof, {inspired by} Bonnesen \cite[Section 6,\S 35 (74)]{Bonnesen}, of Minkowski's Theorem in the axisymmetric case. This result is used
in particular in the proof
of Theorem \ref{thm_inequality_axiconvex}.
\begin{proposition}[\textbf{Bonnesen \cite{Bonnesen}}]
\label{minkowski_axi_bonnesen}
Consider any axisymmetric $C^{1,1}$-surface $\Sigma$ whose inner domain is assumed to be a convex subset of $\mathbb{R}^{3}$. Then, we have:
\[ 4 \pi \lambda^{2} - 2 \lambda \int_{\Sigma} H dA + A(\Sigma) \leqslant 0, \]
where $L = \pi \lambda $ refers to the total length of the generating curve.
\end{proposition}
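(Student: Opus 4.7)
The left-hand side of the statement is a quadratic polynomial in $\lambda$, which I will denote $Q(\lambda) := 4\pi\lambda^{2} - 2\lambda\int_{\Sigma} H\,dA + A(\Sigma)$, and the claim is that $Q(L/\pi) \le 0$. The plan is to derive the explicit identity
\[
Q(\lambda) \;=\; -\pi\int_{0}^{L} \bigl(s - \lambda\theta(s)\bigr)^{2}\sin\theta(s)\,\dot\theta(s)\,ds \qquad (\lambda = L/\pi),
\]
from which the inequality follows at once: by convexity of the inner domain, $\theta$ is non-decreasing on $[0,L]$ with $\theta(0)=0$ and $\theta(L)=\pi$, so $\theta(s) \in [0,\pi]$ gives $\sin\theta \geq 0$, and $\dot\theta \geq 0$ almost everywhere by Rademacher's theorem. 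The integrand on the right is therefore non-negative, making the right-hand side $\le 0$.

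To establish the identity, introduce $G(s) := s - \lambda\theta(s)$, which vanishes at both endpoints; the equality $G(L) = 0$ is exactly the assumption $L = \pi\lambda$. Writing $\sin\theta\,\dot\theta = -(\cos\theta)'$ and integrating $\int G^{2}(\cos\theta)'\,ds$ by parts kills the boundary terms and yields
\[
\int_0^L G^{2}\sin\theta\,\dot\theta\,ds \;=\; 2\int_0^L (s - \lambda\theta)(1 - \lambda\dot\theta)\cos\theta\,ds.
\]
Expanding the product on the right distributes this into a linear combination of the four integrals $\int s\cos\theta\,ds$, $\int s\dot\theta\cos\theta\,ds$, $\int \theta\cos\theta\,ds$, and $\int \theta\dot\theta\cos\theta\,ds$. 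Each collapses after a single integration by parts, using the conditions $\theta(0)=0$, $\theta(L)=\pi$, $x(0)=x(L)=0$, and $\dot x=\cos\theta$: the first three reduce respectively to $-A(\Sigma)/(2\pi)$, $-\int_0^L \sin\theta\,ds$, and $-\int_0^L \dot\theta\,x\,ds$, while the fourth is completely explicit, equal to $\bigl[\theta\sin\theta+\cos\theta\bigr]_0^L = -2$. Substituting, recalling the representations $\int_\Sigma H\,dA = \pi\int_0^L(\sin\theta + \dot\theta x)\,ds$ and $A(\Sigma)=2\pi\int_0^L x\,ds$ from Section~\ref{sec_not}, and using $L = \pi\lambda$ to absorb the $-2\lambda^{2}$ generated by the fourth integral, the right-hand side reassembles into exactly $-Q(\lambda)/\pi$.

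The main obstacle is pure bookkeeping: the handful of integrations by parts must be tracked so that the four reduced integrals recombine into $Q(\lambda)$ without stray terms. Lipschitz regularity of $\theta$ (and therefore of $x$ and $z$) makes every integration by parts legitimate, so no approximation argument is needed. The equality case comes as a bonus: $Q(\lambda)=0$ forces $s=\lambda\theta(s)$ wherever $\sin\theta\,\dot\theta>0$, hence by continuity on all of $(0,L)$, so $\theta(s)=s/\lambda$ and $\Sigma$ is the sphere of radius $\lambda$.
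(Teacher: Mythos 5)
Your proposal is correct and establishes exactly the identity $4\pi\lambda^{2}-2\lambda\int_{\Sigma}H\,dA+A(\Sigma)=-\pi\int_{0}^{L}(s-\lambda\theta)^{2}\sin\theta\,\dot\theta\,ds$ (for $\lambda=L/\pi$) that the paper's proof rests on; you merely run the chain of integrations by parts in the opposite direction, starting from the square $\int G^{2}\sin\theta\,\dot\theta\,ds$ and expanding down to the four elementary integrals, whereas the paper starts from the quadratic, factors it as $\int(\lambda\sin\theta-x)(\lambda\dot\theta-1)\,ds$, and integrates by parts up to the square. The only cosmetic slip is the remark that $L=\pi\lambda$ is ``used to absorb the $-2\lambda^{2}$'': in fact that term matches the $4\pi\lambda^{2}$ of the quadratic directly, and $L=\pi\lambda$ is needed only to kill the boundary term $G(L)^{2}\cos\theta(L)$ in the first integration by parts.
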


\begin{proof}
{Let $\Sigma \in \mathcal{A}_{1,1} $ and $\lambda \in \mathbb{R}$ be given. Using
section \ref{sec_not}, we have in terms of generating map $\theta: [0,L] \rightarrow \mathbb{R}$:
\[ \begin{array}{rcl}
\displaystyle{ 2 \lambda^{2} -  \dfrac{\lambda}{\pi} \int_{\Sigma} H dA + \dfrac{A(\Sigma)}{2 \pi} }& = & \displaystyle{  \int_{0}^{L} \left[ \lambda^{2} \dot{\theta}(s) \sin \theta(s) ds - \lambda \left( \sin \theta(s)  + \dot{\theta}(s) x(s) \right) + x(s) \right] ds } \\
& & \\
& = & \displaystyle{ \int_{0}^{L} \left( \lambda  \sin \theta(s) - x(s) \right) \left( \lambda \dot{\theta}(s) - 1 \right) ds. } 
\end{array} \]
We perform two integration by parts and we get:
\[ \begin{array}{rcl}
\displaystyle{ 2 \lambda^{2} -  \dfrac{\lambda}{\pi} \int_{\Sigma} H dA + \dfrac{A(\Sigma)}{2 \pi} }& = &  \displaystyle{  - \int_{0}^{L} \cos \theta(s) \left( \lambda \theta(s) - s \right) \left( \lambda \dot{\theta}(s) - 1 \right)ds   } \\ 
& & \\
& = &\displaystyle{  \dfrac{1}{2} \left( \lambda \pi - L  \right)^{2} - \dfrac{1}{2}\int_{0}^{L} \left( \lambda \theta(s) - s \right)^{2} \dot{\theta}(s) \sin\theta(s) ds .   }  \\
\end{array} \]}
Now we set $ \lambda = \frac{1}{\pi} L$ and we assume that $\Sigma$ is inner-convex and axisymmetric. Therefore, the Gaussian curvature $K(s) = \kappa_{1}(s) \kappa_{2}(s) = \dot{\theta}(s) \frac{\sin\theta(s)}{x(s)}$ is non-negative on $[0,L]$. Hence, we obtain the required inequality:
\[  4 \pi \lambda^{2} - 2 \lambda \int_{\Sigma} H dA + A(\Sigma) =  - \pi \int_{0}^{L} \left( \lambda \theta(s) - s \right)^{2} K(s) x(s) ds \leqslant 0,  \]
which concludes the proof of Proposition \ref{minkowski_axi_bonnesen}.
\end{proof}

\begin{corollary}
\label{coro_minkowski_axi_bonnesen}
Consider the class $\mathcal{C}$ of axisymmetric inner-convex $C^{1,1}$-surfaces. Then, we have the following inequality:
\[ \forall \Sigma \in \mathcal{C}, \quad \int_{\Sigma} H dA \geqslant \sqrt{4 \pi A(\Sigma)}, \]
where the equality holds if and only if $\Sigma$ is a sphere.
\end{corollary}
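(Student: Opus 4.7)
The plan is to read Proposition \ref{minkowski_axi_bonnesen} as a statement about the quadratic polynomial
\[ P(\lambda) = 4\pi\lambda^2 - 2\lambda \int_{\Sigma} H\,dA + A(\Sigma) \]
in the real variable $\lambda$. Since $P$ has positive leading coefficient $4\pi$, the proposition asserts that this upward-opening parabola attains a non-positive value at $\lambda_0 = L/\pi$. A parabola opening upward that takes a non-positive value must have non-negative discriminant, so
\[ 4\Bigl(\int_\Sigma H\,dA\Bigr)^2 - 16\pi A(\Sigma) \geqslant 0, \]
which rearranges to $\bigl(\int_\Sigma H\,dA\bigr)^2 \geqslant 4\pi A(\Sigma)$.

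To reach the Minkowski inequality itself I still need to check that the total mean curvature is non-negative, so that the positive square root is the correct one. For $\Sigma \in \mathcal{C}$, the characterization recalled in Section \ref{sec_thm_axi} shows that inner-convexity forces the generating angle $\theta$ to be non-decreasing and valued in $[0,\pi]$, so $\kappa_1 = \sin\theta/x \geqslant 0$ and $\kappa_2 = \dot\theta \geqslant 0$ almost everywhere, hence $H \geqslant 0$ and $\int_\Sigma H\,dA \geqslant 0$. Taking square roots then yields the inequality.

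For the equality case, assume $\int_\Sigma H\,dA = \sqrt{4\pi A(\Sigma)}$. The discriminant of $P$ vanishes, so $P$ has a unique real (double) root and satisfies $P \geqslant 0$ everywhere. Combined with $P(\lambda_0) \leqslant 0$, this forces $P(\lambda_0) = 0$ and identifies $\lambda_0 = L/\pi$ as the double root. Returning to the exact identity derived inside the proof of Proposition \ref{minkowski_axi_bonnesen}, the equality $P(\lambda_0) = 0$ reads
\[ \int_0^L \bigl(\lambda_0 \theta(s) - s\bigr)^2 K(s)\, x(s)\,ds = 0, \]
with $K(s)\,x(s) = \dot\theta(s)\sin\theta(s) \geqslant 0$. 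The integrand is non-negative, so it vanishes almost everywhere. In particular, $\theta(s) = \pi s / L$ on the set $E = \{K x > 0\}$.

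The main obstacle will be upgrading this last pointwise identity from $E$ to all of $[0,L]$, since $E$ is only known a priori to have positive measure. My plan for this step is by contradiction: let $f(s) = \theta(s) - \pi s/L$, which is Lipschitz with $f(0) = f(L) = 0$, and suppose $f \not\equiv 0$ on some maximal open interval $(a,b) \subset (0,L)$ where, say, $f > 0$. By maximality $f(a) = 0$, so $\theta(a) = \pi a/L \in (0,\pi)$, hence $\sin\theta > 0$ on a right neighborhood of $a$. On this neighborhood $(a,b)$ we must have $K x = 0$ almost everywhere (otherwise $f$ would vanish on a subset), forcing $\dot\theta \sin\theta = 0$ and therefore $\dot\theta = 0$ almost everywhere there. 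By absolute continuity $\theta$ is constant equal to $\pi a/L$ on a right neighborhood of $a$, which makes $f(s) = -\pi(s - a)/L < 0$ on that neighborhood, contradicting $f > 0$; the case $f < 0$ is symmetric. Hence $f \equiv 0$, i.e.\ $\theta(s) = \pi s/L$ on $[0,L]$, which is precisely the sphere of radius $L/\pi$. The converse (the sphere achieves equality) is a direct computation.
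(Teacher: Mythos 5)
Your proof is correct. The inequality part is the paper's own argument: read Proposition \ref{minkowski_axi_bonnesen} as saying the upward parabola $P(\lambda)$ takes a non-positive value, so its discriminant is non-negative. You are in fact more careful than the paper on one point: the discriminant only yields $\bigl(\int_\Sigma H\,dA\bigr)^2 \geqslant 4\pi A(\Sigma)$, and you explicitly supply the missing sign information $\int_\Sigma H\,dA \geqslant 0$ via $\kappa_1,\kappa_2\geqslant 0$ for inner-convex axisymmetric surfaces (one could equivalently note that both roots of $P$ are real with positive product $A(\Sigma)/4\pi$ and that $P(L/\pi)\leqslant 0$ with $L/\pi>0$ forces a positive root, hence a positive sum of roots). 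Where you genuinely diverge is the equality case. The paper argues measure-theoretically: from the vanishing of $\int_0^L(\lambda\theta-s)^2\dot\theta\sin\theta\,ds$ it deduces $\dot\theta\in\{0,1/\lambda\}$ a.e.\ (using that $\dot\theta=1/\lambda$ a.e.\ on the level set $\{\lambda\theta(s)=s\}$ and $\dot\theta=0$ a.e.\ on $\{\sin\theta=0\}$), and then the constraint $\int_0^L\dot\theta\,ds=\pi=L/\lambda$ forces $\dot\theta=1/\lambda$ a.e. You instead run a connectedness argument on the components of $\{\theta(s)\neq\pi s/L\}$, trading the a.e.\ differentiation on level sets for an elementary contradiction at the endpoint of a maximal interval. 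Both work; yours is slightly longer but avoids the (unstated in the paper) fact about derivatives of Lipschitz functions on level sets. Two small points to tidy in your version: when the maximal interval with $f>0$ starts at $a=0$ you cannot invoke $\theta(a)=\pi a/L\in(0,\pi)$, but $f>0$ together with continuity still gives $0<\theta(s)<\pi$, hence $\sin\theta>0$, on a punctured right neighbourhood, so the argument survives; and the case $f<0$ is symmetric only after reflecting via $s\mapsto L-s$, $\theta\mapsto\pi-\theta$, i.e.\ you must argue from the right endpoint $b$ of the interval rather than from $a$.
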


\begin{proof}

{From Proposition \ref{minkowski_axi_bonnesen}, the polynomial in $\lambda$ has real roots, thus its discriminant must be non-negative, which gives the above inequality. }

{Now if equality holds, $\lambda=L/\pi$ is a double root, that is:}
\[ \int_{0}^{L} (\lambda \theta (s) - s)^{2}  \sin\theta(s) \dot{\theta}(s)  ds = 0  \]
Hence, the integrand must be zero almost everywhere, i.e. $\dot{\theta} $ is equal to zero or to $\frac{1}{\lambda}$ a.e. on $[0,L]$. But we have:
\[ \int_{0}^{L} \dot{\theta}(s)ds = \theta(L) - \theta(0) = \pi = \frac{1}{\lambda} \vert \lbrace s \in [0,L], ~~ \dot{\theta}(s) \neq0 \rbrace \vert \]
Since $ \pi \lambda = L $, we get that $\dot{\theta}(s) \neq 0$ almost everywhere thus $\dot{\theta} = \frac{1}{\lambda}$ a.e. Hence, we get that $\theta$ is linear everywhere since the constant function $\frac{1}{\lambda}$ is continuous and $\Sigma$ is a sphere as required.
\end{proof}

\section*{Acknowledgement}

The work of J\'er\'emy Dalphin, Antoine Henrot and Tak\'eo Takahashi is supported by the project ANR-12-BS01-0007-01-OPTIFORM {\it Optimisation de formes} financed by the French Agence Nationale de la Recherche (ANR).

\bibliographystyle{plain}
\bibliography{biblio}
\end{document}